\documentclass[11pt,leqno]{article}
\usepackage[utf8]{inputenc}
\usepackage{amssymb} 
\usepackage{amstext}
\usepackage{indentfirst}
\usepackage{amsthm}
\usepackage{fullpage}
\usepackage{oldgerm}
\usepackage{amsmath}
\usepackage{xcolor}
\usepackage{hyperref}

\usepackage{mathrsfs}
\numberwithin{equation}{section}

\usepackage{commath} 
\usepackage[mathscr]{euscript} 
\usepackage{comment} 
\usepackage{mathtools} 
\usepackage{verbatim} 
\usepackage{physics} 

\usepackage{wrapfig}
\usepackage{mwe}
\usepackage{epsfig}
\usepackage{color}
\usepackage{graphicx}
\usepackage{amsfonts,amssymb,amsmath}

\newtheorem{thm}{Theorem}

\newtheorem{lem}{Lemma}

\newtheorem{prop}{Proposition}
\newtheorem{rem}{Remark}

\def\eps{\varepsilon}
\def\phi{\varphi}

\newcommand{\RM}{\mathbb{R}}

\usepackage{epsfig}

\title{Orbital Stability of Periodic Traveling
Waves in the $b$-Camassa-Holm Equation}
\author{Brett Ehrman\thanks{Department of Mathematics, University of Kansas, 1460 Jayhawk Blvd., Lawrence, KS 66045, USA; \texttt{ehrman.brett@ku.edu}}
~~ \& ~~
Mathew A. Johnson\thanks{Department of Mathematics, University of Kansas, 1460 Jayhawk Blvd., Lawrence, KS 66045, USA; \texttt{matjohn@ku.edu}}
}
\date{\today}

\begin{document}
\maketitle

\begin{abstract}
In this paper, we identify criteria that guarantees the nonlinear orbital stability of a given periodic traveling wave solution 
within the b-family Camassa-Holm equation.   These periodic waves exist as 3-parameter families (up to spatial translations) of smooth
traveling wave solutions, and their stability criteria are expressed in terms of Jacobians of the conserved quantities
with respect to these parameters.  The stability criteria utilizes a general Hamiltonian structure which exists for every $b>1$, and hence
applies outside of the completely integrable cases ($b=2$ and $b=3$).
\end{abstract}

{\begin{center}
{\bf Keywords}: Orbital Stability; Periodic Traveling Waves; b-Camassa-Holm Equation
\end{center}
}


\section{Introduction}

We study the nonlinear stability of periodic traveling wave solutions for the b-family of Camassa-Holm equations (b-CH), which is given by
\begin{equation}\label{e:bch}
u_t - u_{txx} + (b+1)uu_x = bu_xu_{xx} + uu_{xxx}
\end{equation}
where here $b\in\RM$ is a parameter.   The family of models \eqref{e:bch} was introduced in \cite{DKK02,DGH01} by using transformations
of the integrable hierarchy of KdV equations.  In the modeling, the b-CH equation describes the horizontal velocity $u=u(x,t)$ for the unidirectional
propagation of water waves on a free surface in shallow water over a flat bed.  In the special cases $b=2$ and $b=3$ it is known that \eqref{e:bch}
is completely integrable via the inverse scattering transform, with $b=2$ corresponding to the well-studied Camassa-Holm equation and $b=3$
corresponding to the Degasperis-Procesi equation.  Further, it is known according to various tests for integrability
that the b-CH equation fails to be integrable outside of the cases $b=2$ and $b=3$: see, for example, \cite{MN02,Hone09}.

Besides being completely integrable, both the Camassa-Holm and Degasperis-Procesi equations have received a considerable amount of attention
due to the fact that they admit both smooth and peaked traveling solitary and periodic waves, as well as multi-soliton type solutions.  Additionally,
in these integrable cases the equation \eqref{e:bch} admits multiple Hamiltonian structures: the Camassa-Holm equation (corresponding to $b=2$) admits
three separate Hamiltonian structures, while the Degasperis-Procesi equation admits two.  Concerning the stability of smooth periodic and solitary waves,
there have been multiple studies of their orbital stability by working with the conserved energy integrals in the natural energy space:
see, for example, \cite{CW02,LLW20,GMNP22,GP22} and references therein.  Unfortunately,
the Hamiltonian structures used in these studies are a special feature due to the completely integrability of \eqref{e:bch} in the cases $b=2$ and $b=3$,
and hence these results cannot be extended to more general values of $b$.

In this work, we are interested in developing an orbital stability theory for periodic traveling wave solutions of \eqref{e:bch} which applies to any $b>1$.  
Note that the analogous work for smooth solitary wave solutions of \eqref{e:bch} was recently carried out in \cite{LP22,LL23}, where the authors utilized
the Hamiltonian formulation of the b-CH equation from \cite{DHH03}.  This particular Hamiltonian formulation is expressed in terms of the so-called 
momentum density $m=u-u_{xx}$ of the solution, and applies to the entire family of equations \eqref{e:bch} outside of the case $b=1$.  In this work,
we extend the work from \cite{LP22} to the case of periodic traveling waves of \eqref{e:bch}, which inherently constitutes a much larger
family of solutions: periodic traveling waves constitute (modulo spatial translations) a 3-parameter family of solutions, while the solitary waves only constitute
(again modulo spatial translations) a 1-parameter family.
The higher dimensionality of the associated solution manifold introduces a number of technical challenges not encountered in the solitary wave study.
We approach these challenges by using a methodology introduced in \cite{J09} in the stability analysis of periodic traveling wave solutions
of the generalized KdV equations.

We also note that there have been a series of recent works on the orbital stability of periodic traveling wave solutions
of \eqref{e:bch} in the completely integrable cases $b=2$ and $b=3$: see \cite{GMNP22} and \cite{GP22}.  While there are obviously some
similarities between these and the present work, the methodology for handling the higher dimensionality of the manifold of periodic
traveling waves are quite different, and we will expand upon this throughout this manuscript.  Additionally, both the works \cite{GMNP22,GP22} utilize
Hamiltonian structures, and the conserved quantities associated to them, which do not extend to general $b>1$.

The basic approach utilized in this work is by now classical, essentially being an application of the methodology formalized
by Grillakis, Shatah and Strauss in \cite{GSS1} for the stability of nonlinear solitary waves in Hamiltonian systems.  
Basically, we start off by carefully studying the existence theory for periodic traveling wave solutions $u(x,t)=\phi(x-ct)$, of period $T$ say,
of \eqref{e:bch}.  As the Hamiltonian structure associated with \eqref{e:bch} is expressed solely in terms of the 
momentum density $m=u-u_{xx}$ of solutions of \eqref{e:bch}, we then encode the momentum density of our solution $\mu=\phi-\phi''$
as a critical point of an appropriate action functional built entirely out of conserved quantities for the b-CH flow.    Using
Taylor series it quickly becomes apparent that the (orbital) stability or instability of $\mu$ is intimately related to
the spectral properties of the second variation of the associated action functional evaluated at $\mu$.  We identify conditions
on the underlying wave $T$-periodic wave $\mu$ that guarantees this second variation has exactly one negative $T$-periodic eigenvalue, a simple $T$-periodic
eigenvalue at the origin (associated to the translational invariance of \eqref{e:bch}), and the rest of the  $T$-periodic eigenvalues are strictly positive and bounded away from zero.
By now considering the notion of orbtial stability (i.e. identifying functions up to spatial translations) and the class of perturbations appropriately, we then 
identify a final set of conditions guaranteeing the stability of $\mu$, and hence $\phi$, to $T$-periodic perturbations.

\

The outline of our paper is as follows.  In Section \ref{S:basics} we review some basic features regarding the b-CH equation \eqref{e:bch}, including
the Hamiltonian structure used in this work as well as the existence theory for periodic traveling wave solution of \eqref{e:bch}.
Our main stability analysis is contained in Section \ref{S:stab_criteria}, culminating into our main result, Theorem \ref{T:main}. 
Finally, in Appendix \ref{S:Appendix} we establish a technical result used in Section \ref{S:stab_criteria}.

%
%
%
%
%

\

\noindent
{\bf Acknowledgments:} The authors would like to thanks Stephane Lafortune and Dmitry Pelinovsky for several helpful discussions regarding the b-CH equations.
The work of both authors was partially funded by the NSF under grant DMS-2108749.  MAJ was also supported by the Simons Foundation Collaboration grant 
number 714021.  The authors also thank the referees for their thoughtful and helpful feedback, which we believe greatly improved the paper.

\section{Some Basic Properties of the b-CH Family}\label{S:basics}

In this section, we collect some basic results regarding the b-CH equation \eqref{e:bch} and its solutions.

\subsection{Hamiltonian Structure and Conservation Laws}

For each $b\in\RM$ the b-CH equation \eqref{e:bch} is known to be a Hamiltonian system in terms of the so-called momentum density
$m=u-u_{xx}$.  Since we are interested in the local dynamics about periodic traveling waves, here we restrict our 
discussion of the Hamiltonian formulation on the space $L^2_{\rm per}(0,T)$ for some $T>0$.  To this end, it is straightforward to see that \eqref{e:bch} can be rewritten as
\begin{equation}\label{e:bch_m}
m_t+um_x+bmu_x=0
\end{equation}
which, for $b\neq 1$, admits the Hamiltonian formulation
\begin{equation}\label{e:Ham_formulation}
\frac{dm}{dt} = J_m\frac{\delta E}{\delta m}
\end{equation}
where here 
\begin{equation}\label{e:J}
J_m := \frac{-1}{b-1}(bm\partial_x + m_x)(1- \partial_x^2)^{-1}\partial_x^{-1}(b\partial_x m - m_x)
\end{equation}
is a (state-dependent) skew-adjoint operator on $L^2_{\rm per}(0,T)$ and
\begin{equation}\label{e:E}
E(m) := \int_0^T m~dx.
\end{equation}
We note that from \eqref{e:bch_m} the functional $E(m)$ is readily seen to be a conserved quantity, often referred to as either the total momentum
or the mass of the wave.  Additionally,
it is known that for general $b\neq 0$ the b-CH equation admits two additional conserved quantities given by
\begin{equation}\label{e:F1F2}
F_1(m):=\int_0^T m^{1/b}dx~~~{\rm and}~~~F_2(m):=\int_0^T\left(\frac{m_x^2}{b^2m^2}+1\right)m^{-1/b}dx.
\end{equation}
We note that, in general, the above functionals are well-defined and smooth functions on the set
\[
X_T:=\left\{m\in H^1_{\rm per}(0,T):m(x)>0~~{\rm for~all}~x\in\RM\right\}.
\]
Assuming that \eqref{e:bch} is well-posed on some appropriate subset of $H^3_{\rm per}(0,T)$, our aim is to establish criteria for
the nonlinear orbital stability of $T$-periodic traveling wave solutions of the b-CH equation with respect to perturbations in $H^3_{\rm per}(0,T)$.
Regarding well-posedness, for general $b>1$ the initial-value problem for \eqref{e:bch} is locally well posed in the space $H^s_{\rm per}(0,T)$ for any $s>3/2$:
see \cite{EY08}.  Further, the initial-value problem with $b>1$ is ill-posed in $H^s_{\rm per}(0,T)$ for $s<3/2$ due to lack of continuous dependence and norm inflation, and hence
orbital stability in the energy space is only conditional with respect to the existence of local solutions.  For more details, see \cite{HGH16,LP22}.

Finally, a number of Jacobian determinants will arise throughout our work.  For notational simplicity, we adopt the following
notation for $2\times 2$ Jacobians 
\begin{equation}\label{e:Jacobian}
\{f,g\}_{x,y}:= \det\left(\frac{\partial(f,g)}{\partial(x,y)}\right)=
\det\left(\begin{array}{cc}
f_{x} & f_{y}\\
g_{x} & g_{y}
\end{array}\right)
\end{equation}
and similarly $\{f,g,h\}_{x,y,z}$ for the analogous $3\times 3$ Jacobian.

\subsection{Existence of Periodic Traveling Waves}\label{S:exist}

We now  study the existence of smooth periodic traveling wave solutions of \eqref{e:bch}.  We note that the existence theory for smooth solitary wave solutions
has been worked out in detail in \cite{LP22}.  The difference here, of course, is that the profile now does not have a constant asymptotic state as $x\to\pm\infty$.
Throughout, we will assume $b>1$.

Traveling wave solutions of \eqref{e:bch} correspond to solutions of the form $u(x,t) = \phi(x-ct)$ for some wave profile $\phi$ and wave speed $c>0$.
The profile $\phi(z)$ is thus required to be a stationary solution of the evolutionary equation
\begin{equation}\label{e:bch2}
u_t-u_{tzz}-cu_z+cu_{zzz}+(b+1)uu_z=bu_zu_{zz}+uu_{zzz}
\end{equation}
written here in the traveling coordinate $z=x-ct$.  After some rearranging, it follows that the such stationary solutions satisfy
the ODE
\begin{equation}\label{e:profile1}
(\phi-c)(\phi-\phi'')'+b\phi'(\phi-\phi'')=0,
\end{equation}
where here $'$ denotes differentiation with respect to $z$.  Note that, by elementary bootstrapping arguments, if $\phi$ is a $T$-periodic weak solution of \eqref{e:profile1} 
then we necessarily have $\phi\in C^\infty_{\rm per}(0,T)$ provided that either $\phi(x)<c$ or $\phi(x)>c$ for all $x\in\RM$.
Throughout our analysis, we consider only solutions satisfying 
\begin{equation}\label{e:sign}
\phi(x)<c~~{\rm for~all}~~x\in\RM.
\end{equation}
With this condition in mind, we note that, by multiplying both sides of  \eqref{e:profile1} by the integrating 
factor $(c-\phi)^b$, the ODE \eqref{e:profile1} can be rewritten as
\begin{equation}\label{e:profile2}
\frac{d}{dx}\left((c-\phi)^b(\phi-\phi'')\right)=0,~~{\rm i.e.}~~\phi-\phi''=\frac{a}{(c-\phi)^b}
\end{equation}
where here $a\in\RM$ is a constant of integration.  After another integration, the above may be further reduced by quadrature to 
\begin{equation}\label{e:quad}
\frac{1}{2}\left(\phi'\right)^2=E-\left(-\frac{1}{2}\phi^2+\frac{a}{(b-1)(c-\phi)^{b-1}}\right),
\end{equation}
where here $E\in\RM$ is another constant of integration,
and hence the existence and non-existence of bounded solutions of \eqref{e:profile1} can be determined 
by studying the potential function
\begin{equation}\label{e:potential}
V(\phi;a,c):=-\frac{1}{2}\phi^2+\frac{a}{(b-1)(c-\phi)^{b-1}}
\end{equation}
Indeed, by standard phase-plane analysis it is clear that a necessary and sufficient condition for the existence of periodic solutions of 
\eqref{e:profile1} is that the potential $V(\cdot;a,c)$ have a local minimum.  

\begin{rem}\label{r:quad_alternate}
It is also possible to integrate the third order profile equation \eqref{e:profile1} directly, yielding
\[
(c-\phi)(\phi-\phi'')+\frac{b-1}{2}\left((\phi')^2-\phi^2\right)=f
\]
or, equivalently,
\[
-(c-\phi)\phi''+c\phi+\frac{b-1}{2}\left(\phi'\right)^2-\frac{b+1}{2}\phi^2=f
\]
for some constant $f\in\RM$.  Using \eqref{e:profile2} it immediately follows that $f=(b-1)E$, with $E$ being as in \eqref{e:quad}, and hence, in this sense, the above equation
is redundant from \eqref{e:profile2}-\eqref{e:quad}.
\end{rem}

\begin{figure}[t]
\begin{center}
\includegraphics[scale=.8]{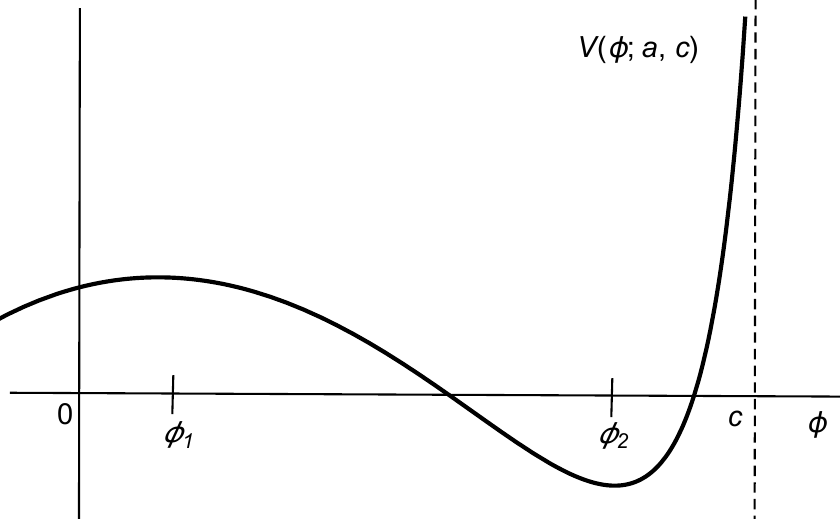}
\caption{Depiction of the effective potential $V(\phi;a,c)$ for an admissible value of $a$.  Note there is a vertical asymptote at $\phi=c$ and that, and that
all the periodic solutions here exist for $\phi<c$.}
\label{f:Vplot} 
\end{center}
\end{figure}

To study the critical points of $V$ satisfying \eqref{e:sign} we note that
\[
V_\phi(\phi;a,c)=-\phi+\frac{a}{(c-\phi)^b}
\]
and hence seeking critical points of $V(\cdot;a,c)$ with $\phi<c$ for fixed parameters $(a,c)$ is equivalent to seeking roots of the function
\[
g(\phi):=\phi(c-\phi)^b-a,~~\phi<c.
\]
To this end, note that 
\[
g'(\phi)=(c-\phi)^{b-1}\left(c-(b+1)\phi\right)
\]
and hence the only critical point of $g$ satisfying $\phi<c$ is $\phi=\frac{c}{b+1}$.  In particular, elementary calculations show that
\[
g''\left(\frac{c}{b+1}\right)=-bc\left(\frac{bc}{b+1}\right)^{b-2}<0
\]
and hence $\phi=\frac{c}{b+1}$ is a strict local maximum of $g$.  Since $g\in C^\infty(-\infty,c)$ it follows by above that $g$ is strictly
increasing on $\left(-\infty,\frac{c}{b+1}\right)$ and strictly decreasing on $\left(\frac{c}{b+1},c\right)$.  Further, we have
\[
g(0)=-a,~~g\left(\frac{c}{b+1}\right)=\frac{b^b c^{b+1}}{(b+1)^{b+1}}-a,~~\lim_{\phi\to c}g(\phi)=-a
\]
and hence, by the Intermediate Value Theorem, for each $a\in\left(0,\frac{b^b c^{b+1}}{(b+1)^{b+1}}\right)$ the equation
$g(\phi)=0$ has exactly two solutions 
\begin{equation}\label{e:maxmin}
\phi_1\in\left(0,\frac{c}{b+1}\right)~~{\rm and}~~\phi_2\in\left(\frac{c}{b+1},c\right).
\end{equation}
For such $a$ we necessarily have $V'(\phi_1)=V'(\phi_2)=0$ and, since
\[
V_\phi(\phi;a,c)=-\frac{g(\phi)}{(c-\phi)^b},
\]
it follows that $V$ achieves unique local max and min values at $\phi_1$ and $\phi_2$, respectively. 

It follows by the above and elementary phase plane analysis that if we define the set
\begin{equation}\label{e:ExistenceSet}
\mathcal{B}:=\left\{(a,E,c)\in\RM^3:c>0,~~a\in\left(0,\frac{b^b c^{b+1}}{(b+1)^{(b+1)}}\right),~~E\in\left(V(\phi_2;a,c),V(\phi_1;a,c)\right)\right\}
\end{equation}
then for each set of parameters $(a,E,c)\in\mathcal{B}$ the profile equation \eqref{e:profile1} admits a one-parameter family, parameterized by translation invariance,
of smooth periodic solutions $\phi(x;a,E,c)$ satisfying $\phi<c$ and with period
\[
T=T(a,E,c)=\sqrt{2}\int_{\phi_{\rm min}}^{\phi_{\rm max}}\frac{d\phi}{\sqrt{E-V(\phi;a,c)}},
\]
where here $\phi_{\rm min}$ and $\phi_{\rm max}$ denote, respectively, the minimum and maximum roots of the equation $E-V(\cdot;a,c)=0$, respectively, and hence
correspond to the minimum and maximum values of the corresponding periodic solution $\phi$.  Note that since the values $\phi_{\rm min}$ and $\phi_{\rm max}$
are smooth functions of the traveling wave parameters $(a,E,c)$, it follows that the period function $T(a,E,c)$ represents a $C^1$ function on $\mathcal{B}$.

Putting all of the above together, it follows that the b-CH equation \eqref{e:bch} admits a 4-parameter family, constituting a $C^1$ manifold, of periodic traveling wave solutions 
of the form
\[
\phi(x-ct+x_0;a,E,c),~~x_0\in\RM,~~(a,E,c)\in\mathcal{B}
\]
with period $T=T(a,E,c)$.  Recalling from the previous section that the Hamiltonian formulation for the b-CH equation is expressed entirely in terms
of the momentum density $m=u-u_{xx}$, we note from \eqref{e:profile2} that if $\phi(x;a,E,c)$ is a smooth $T$-periodic stationary solution \eqref{e:bch2}
as constructed above, then 
\begin{equation}\label{e:relation1}
\mu(x;a,E,c)=\frac{a}{(c-\phi(x;a,E,c))^b}
\end{equation}
is a smooth $T$-periodic stationary solution of\footnote{With a slight abuse of notation, henceforth we will use replace the traveling variable $z$ simply by $x$.}
\[
m_t-cm_x+um_x+bmu_x=0
\]
satisfying $\mu>0$ for all $x\in\RM$.  In particular, for each $(a,E,c)\in\mathcal{B}$ we have $\mu(x;a,E,c)\in X_T$ with $T=T(a,E,c)$.  Further, following
the procedure  above we can restrict the conserved quantities in \eqref{e:F1F2} to the manifold of periodic traveling wave solutions of the b-CH equation
yielding, with slight abuse of notation, $C^1$ functions $F_1,F_2:\mathcal{B}\to\RM$ defined via
\[
\left\{\begin{aligned}
F_1(a,E,c)&:= \int_0^{T(a,E,c)} \mu(x;a,E,c)^{1/b}dx\\
F_2(a,E,c)&:=\int_0^{T(a,E,c)}\left(\frac{\mu_x(x;a,E,c)^2}{b^2\mu(x;a,E,c)^2}+1\right) \mu(x;a,E,c)^{-1/b}dx
\end{aligned}\right.
\]
As we will see, the gradients of these conserved quantities along the manifold of periodic traveling wave solutions of \eqref{e:bch}, or equivalently \eqref{e:bch_m},
will plan a central role in our forthcoming analysis.

\begin{rem}\label{R:compare1}
As mentioned in the introduction, there are similarities between the present work and that in \cite{GMNP22,GP22}, where authors considered the
stability of periodic traveling wave solutions of the b-CH equation \eqref{e:bch} in the completley integrable cases $b=2$ and $b=3$.  In addition
to using a different Hamiltonian structure in our work (recall those used in \cite{GMNP22,GP22} do not extend outside the completely integrable cases),
our work differs in how we parameterize the set of periodic traveling waves.  In \cite{GMNP22,GP22}, the authors, for a fixed $b>1$, start by fixing
a wave speed $c>0$ and thus reducing to a 2-parameter family depending on $(a,E)$.  They then consider a curve $E=E(a)$ in parameter space where
the period is held constant, and thus reduce to a 1-parameter family of waves with a fixed period and fixed wave speed depending only
on the parameter $a$.  In our work, by contrast, we work with the full 4-parameter family of periodic traveling waves and their variations
with respect to all four parameters.  This results in the introduction of a number of Jacobian determinants associated with this parameterization arising
in our work, which of course are geometrically related to the ability to locally  reparameterize  the manifold of solutions.  This geometric/Jacobian
approach has had significant success not only in orbital stability analysis of periodic traveling waves in nonlinear dispersive systems, 
but also in the stability analysis of such solutions to more general classes of perturbations.  See, for example, \cite{BrJK14,BrJ10,BrJK11,HJ15,J09,J_BBM,JP20,JZ_KP,JZBR10}.
\end{rem}

We close this section by noting that additional families of smooth periodic traveling waves exist for $b\leq 1$, and the associated existence and stability
theory can be handled in a similar way as in this paper.  Further, when $b>1$ one can use the above analysis of the potential $V$ 
to additionally show that no smooth periodic traveling waves exist for $\phi>c$.

\section{Orbital Stability Criteria}\label{S:stab_criteria}

In this section, we derive conditions which guarantee a given $T$-periodic traveling wave solution of the b-CH equation is orbitally stable
with respect to $T$-periodic perturbations. Throughout our analysis, we will work in the momentum density formulation \eqref{e:bch_m}.
To this end, we begin by attempting to encode a given $T$-periodic traveling wave solution $\mu(\cdot;a,E,c)$ as a critical point
of an action functional of the form
\begin{equation}\label{e:LambdaOmega}
\Lambda(m) = E(m) - \omega_1F_1(m) - \omega_2F_2(m),~~\omega_1,\omega_2\in\RM
\end{equation}
defined when $m \in H_{per}^1[0,T]$ and $m >0$, where here $E$ denotes the total momentum functional \eqref{e:E} and $F_1,F_2$ are the conserved quantities
given in \eqref{e:F1F2}.  More specifically, we seek values $\omega_1,\omega_2\in\RM$ such that the profile equation \eqref{e:quad} is equivalent
to the Euler-Lagrange equation of the functional $\Lambda$.  This is accomplished in the following Lemma.

\begin{lem}\label{L:Lagrange}
For a fixed $b>1$, let $\mu(\cdot;a,E,c)$ be a $T$-periodic solution of \eqref{e:quad}.  Then $\mu$ is a critical point
of the action functional $\Lambda(m)$ provided that 
\[
\omega_1 = \frac{b-1}{2a^\frac{1}{b}}\left[2E + c^2\right] ~~{\rm and}~~ 
\omega_2 = \frac{1}{2}a^{\frac{1}{b}}(b-1)
\]
\end{lem}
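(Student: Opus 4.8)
The plan is to reduce the Euler--Lagrange equation for $\Lambda$ to the profile equation by exploiting a change of dependent variable that trivializes the awkward $F_2$ functional. The crucial observation is that all three functionals entering \eqref{e:LambdaOmega} become elementary if one writes $m = w^{-b}$, i.e. $w = m^{-1/b}$. Indeed, a direct substitution gives $E(m) = \int_0^T w^{-b}\,dx$ and $F_1(m) = \int_0^T w^{-1}\,dx$, while — since $\tfrac{m_x^2}{b^2m^2} = \tfrac{(w')^2}{w^2}$ and $m^{-1/b} = w$ — the integrand of $F_2$ in \eqref{e:F1F2} collapses to $\tfrac{(w')^2}{w} + w$, so that $F_2(m) = \int_0^T\big(\tfrac{(w')^2}{w} + w\big)\,dx$. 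Because $m \mapsto w = m^{-1/b}$ is a diffeomorphism on the cone of positive functions, with $\tfrac{dm}{dw} = -b\,w^{-b-1} \neq 0$, it follows from the chain rule $\tfrac{\delta\tilde\Lambda}{\delta w} = \tfrac{dm}{dw}\tfrac{\delta\Lambda}{\delta m}$ that $\mu$ is a critical point of $\Lambda$ if and only if $w := \mu^{-1/b}$ is a critical point of $\tilde\Lambda(w) := \int_0^T\big[w^{-b} - \omega_1 w^{-1} - \omega_2(\tfrac{(w')^2}{w} + w)\big]\,dx$. This is the step that removes all of the $m_x^2 m^{-2}$ algebra.

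First I would compute the Euler--Lagrange equation for $\tilde\Lambda$. Treating its integrand as a Lagrangian in $(w,w')$ and clearing a factor of $w^2$, one is led to
\[
-b\,w^{1-b} + \omega_1 - \omega_2 (w')^2 - \omega_2 w^2 + 2\omega_2\,w\,w'' = 0.
\]
Next I would record the two reductions of the profile equation in the $w$ variable. The relation \eqref{e:relation1}, equivalently \eqref{e:profile2}, reads $c - \phi = a^{1/b} w$, so that $\phi = c - a^{1/b} w$; substituting into $\mu = \phi - \phi'' = w^{-b}$ yields the second-order identity
\[
w'' = w + a^{-1/b}\big(w^{-b} - c\big),
\]
while the quadrature \eqref{e:quad}, together with $(\phi')^2 = a^{2/b}(w')^2$, gives the first-order identity
\[
(w')^2 = a^{-2/b}\big(2E + c^2\big) - 2c\,a^{-1/b} w + w^2 - \tfrac{2a^{-1/b}}{b-1}\,w^{1-b}.
\]

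Finally I would substitute these two identities into the Euler--Lagrange equation and collect by functional form in $w$. The decisive feature is that the $w$- and $w^2$-contributions cancel identically, leaving only a constant term and a term proportional to $w^{1-b}$:
\[
\Big[\omega_1 - \omega_2\,a^{-2/b}(2E + c^2)\Big] + \Big[-b + 2\omega_2\,a^{-1/b}\tfrac{b}{b-1}\Big]\,w^{1-b} = 0.
\]
Since $b>1$, the functions $1$ and $w^{1-b}$ are linearly independent along the nonconstant periodic profile $w$, so both bracketed coefficients must vanish. The $w^{1-b}$ coefficient forces $\omega_2 = \tfrac12 a^{1/b}(b-1)$, and feeding this into the constant coefficient forces $\omega_1 = \tfrac12 a^{-1/b}(b-1)(2E + c^2)$, which are precisely the claimed values. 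I expect the main obstacle to be purely organizational: carrying out the substitution of both profile identities without sign or exponent slips and verifying the clean cancellation of the $w$ and $w^2$ terms, since that cancellation is exactly what makes the coefficient-matching argument conclusive. A secondary point requiring care is the linear-independence justification used to separate the two surviving coefficients, which relies on $b \neq 1$ and on $w$ being genuinely nonconstant.
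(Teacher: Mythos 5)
Your proof is correct, and it takes a genuinely different route from the paper's. The paper (following \cite[Lemma 2]{LP22}) first computes the variational derivatives $\frac{\delta F_1}{\delta m}$ and $\frac{\delta F_2}{\delta m}$ directly in the $m$ variable, obtaining the rather unwieldy Euler--Lagrange equation \eqref{e:LambdaPrime} with its $\frac{(2b+1)\mu_x^2}{b^2\mu^2}$ and $\frac{2\mu_{xx}}{b\mu}$ terms, and only afterwards uses the relations \eqref{e:mu_derivatives} to convert $\mu$-derivatives into $\phi$-derivatives and match against \eqref{e:quad}. You instead perform the change of dependent variable $w=m^{-1/b}$ at the level of the action \emph{before} taking the variation, which turns $F_2$ into the classical kinetic-plus-potential form $\int\big(\tfrac{(w')^2}{w}+w\big)dx$ and eliminates all of the second-derivative and $(2b+1)$ bookkeeping; the chain-rule step $\frac{\delta\tilde\Lambda}{\delta w}=\frac{dm}{dw}\frac{\delta\Lambda}{\delta m}$ with $\frac{dm}{dw}=-bw^{-b-1}\neq 0$ on the positive cone is exactly what justifies commuting the variable change past the variation, and is the one point a reader should pause on (it is fine here since the profile is smooth and positive, and the periodic boundary terms vanish). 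Note also that since $w=a^{-1/b}(c-\phi)$ is an affine function of $\phi$, your two profile identities are just \eqref{e:profile2} and \eqref{e:quad} in disguise, so the two proofs verify the same underlying algebra; yours is organized so that the $w$ and $w^2$ contributions cancel for \emph{every} $\omega_2$, isolating the Lagrange multipliers as coefficients of the linearly independent functions $1$ and $w^{1-b}$, which additionally shows the stated values are the \emph{only} admissible ones (a uniqueness statement the paper does not emphasize, and slightly more than the ``provided that'' direction the lemma requires). I verified your Euler--Lagrange computation, both profile identities, the cancellations, and the resulting values of $\omega_1,\omega_2$; all are correct.
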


\begin{proof}
The proof is given in \cite[Lemma 2]{LP22}.  For completeness, we outline the main idea of the argument.  First, from \eqref{e:F1F2} we note that
\[
\frac{\partial E}{\partial m}=1,~~~~\frac{\partial F_1}{\partial m}(m) = \left(\frac{1}{b}\right)m^{1/b-1}
\]
and
\[
\frac{\partial F_2}{\partial m}(m)= \frac{1}{b}m^{-(1/b+1)}\left(\frac{(2b+1)m_x^2}{b^2m^2}-\frac{2m_{xx}}{bm}-1\right).
\]
and hence the equation $\frac{\partial\Lambda}{\partial m}(\mu)=0$ is equivalent to the differential equation
\begin{equation}\label{e:LambdaPrime}
1 - \omega_1\left(\frac{1}{b}\right) \mu^{\frac{1}{b}-1} - \omega_2\frac{1}{b}\mu^{\frac{-1}{b}-1}
\left(\frac{(2b+1)\mu_x^2}{b^2\mu^2}-\frac{2\mu_{xx}}{b\mu}-1\right)=0
\end{equation}
As in \cite{LP22}, we note that if $\mu$ is a $T$-periodic weak solution of \eqref{e:LambdaPrime} then by elementary bootstrapping
arguments we have $\mu\in C^\infty_{\rm per}(0,T)$.  Consequently, to show the critical points of $\Lambda$ are precisely the periodic
traveling wave solutions $\mu(\cdot;a,E,c)$ constructed in the previous section, it is sufficient to establish an equivalence between 
the differential equation \eqref{e:LambdaPrime} and \eqref{e:profile2}.

To this end, it is important to observe that differentiating \eqref{e:profile2} one has the relations
\begin{equation}\label{e:mu_derivatives}
\mu'=\frac{b\phi'}{c-\phi}\mu~~{\rm and}~~\mu''= \frac{b\phi''}{c-\phi}\mu + \frac{b(b+1)\left(\phi'\right)^2}{(c-\phi)^2}\mu
\end{equation}
and hence derivatives of $\mu$ in \eqref{e:LambdaPrime} can be replaced by derivatives of $\phi$ and multiples of $\mu$.  Upon making
these substitutions into \eqref{e:LambdaPrime} one finds by straightforward calculations that the resulting equation is equivalent to 
\eqref{e:quad} provided the stated choices of $\omega_1$ and $\omega_2$ are made.  For more details, see \cite{LP22}.
\end{proof}

\begin{rem}\label{r:LagrangeMult}
It is important to note that the Lagrange multipliers $\omega_1$ and $\omega_2$ found above are smooth functions of 
the traveling wave parameters $(a,E,c)$ on the entire existence set $\mathcal{B}$ defined in \eqref{e:ExistenceSet}.  In particular, we note that
\[
\nabla_{(a,E,c)}\omega_1=\left<\frac{-(b-1)}{2ba^{\frac{1}{b}+1}}\left[2E + c^2\right],~\frac{b-1}{a^\frac{1}{b}},~\frac{c(b-1)}{a^{1/b}}\right>
\]
and
\[
\nabla_{(a,E,c)}\omega_2=\left<\frac{1}{2b}a^{\frac{1}{b}-1}\left(b-1\right),~0,~0\right>.
\]
In particular, while $\omega_2$ depends only on the parameter $a$, the Lagrange multiplier $\omega_1$ depends on all three traveling
wave parameters $(a,E,c)$.  This is a quite different case than that encountered, say, in the KdV, BBM or NLS type equations where each Lagrange multiplier 
depends on only one of the traveling wave parameters: see, for instance, \cite{J09,J_BBM,GH07,LBrJM21}. In the forthcoming analysis, it is interesting to track the effect of this additional
dependence of the Lagrange multipliers.  
\end{rem}

By Lemma \ref{L:Lagrange}, our periodic traveling wave solutions $\mu(\cdot;a,E,c)$ of the b-CH equation are realized as critical points of the action
functional $\Lambda$.  In order to classify this critical point as a local minimum, maximum or a saddle point, we study the second
derivative 
\[
\mathcal{L}[\mu]:=\frac{\delta^2\Lambda}{\delta m^2}(\mu)
\]
evaluated at the wave $\mu(\cdot;a,E,c)$.  To this end, we note through straightforward, but lengthy and tedious, calculations (see, for example,
\cite[Corollary 1]{LP22}) the operator $\mathcal{L}[\mu]$ can be expressed as a Sturm-Liouville operator of the form
\begin{equation}\label{e:L}
\mathcal{L}[\mu]=-p(x)\partial_x^2-q(x)\partial_x-r(x)
\end{equation}
where $p$, $q$, and $r$ are smooth, $T$-periodic functions with\footnote{The explicit expression for $r$ is quite lengthy, and also irrelevant to our 
calculations.  As such, we do not include it.}
\begin{equation}\label{e:L_coeff}
p(x) = \frac{\omega_2}{b^2\mu^{2+1/b}},~~~q(x)=-\frac{\omega_2(2b+1)\mu_x}{b\mu^{3+1/b}}.
\end{equation}
In particular, we note, since $b>1$, that $p(x)>0$ for all $x\in\RM$.  

It follows that $\mathcal{L}[\mu]$
is a self-adjoint linear operator acting on $L^2_{\rm per}(0,T)$ with compactly embedded domain $H^2_{\rm per}(0,T)$.  Consequently,
it is well-known\footnote{For example, see \cite[Section 2.3]{KP13}.} that the spectrum of $\mathcal{L}[\mu]$ on $L^2_{\rm per}(0,T)$ consists 
of an increasing sequence of real eigenvalues satisfying
\[
\lambda_0<\lambda_1\leq\lambda_2<\lambda_3\leq\lambda_4\ldots \nearrow +\infty
\]
and that the associated eigenfunctions $\{\psi_n\}_{n=0}^\infty$ forms an orthogonal basis for $L^2_{\rm per}(0,T)$.  Further, 
the (ground state) eigenfunction $\psi_0$ can be chosen to be strictly positive, while for each $n\in\mathbb{N}$ the eigenfunctions $\psi_{2n-1}$ 
and $\psi_{2n}$ have precisely $2n$ simple zeroes on $[0,T)$.

Noting that, by the spatial translation invariance of \eqref{e:bch_m}, we have
\[
\mathcal{L}[\mu]\mu_x=0
\]
it follows that $\lambda=0$ is a $T$-periodic eigenvalue of $\mathcal{L}[\mu]$.  Since $\mu_x$ has precisely two roots on $[0,T)$, by construction, it further
follows that $\lambda=0$ is either the second or third smallest eigenvalue of $\mathcal{L}[\mu]$ and hence, in particular, $\mathcal{L}[\mu]$ has at least 
one negative eigenvalue.  A precise count on the number of negative eigenvalues, as well as the simplicity of the zero eigenvalue, is established
in the following result.

\begin{thm}\label{T:morse}
The spectrum of the linear operator $\mathcal{L}[\mu]$ considered on the space $L^2_{\rm per}(0,T)$ satisfies the following 
trichotomy\footnote{Recall that the notation $\{f,g\}_{x,y}$ is defined in \eqref{e:Jacobian} above.}
\begin{itemize}
\item[(i)]  If $\{T,\omega_1\}_{E,c}>0$ then $\mathcal{L}[\mu]$ has exactly one negative eigenvalue, a simple eigenvalue at zero, and the rest of the
spectrum is strictly positive and bounded away from zero.
\item[(ii)]  If $\{T,\omega_1\}_{E,c}=0$ then $\mathcal{L}[\mu]$ has exactly one negative eigenvalue, a double eigenvalue at zero, and the rest of the
spectrum is strictly positive and bounded away from zero.
\item[(iii)] If $\{T,\omega_1\}_{E,c}<0$ then $\mathcal{L}[\mu]$ has exactly two negative eigenvalues, a simple eigenvalue at zero, and the rest of the
spectrum is strictly positive and bounded away from zero.
\end{itemize}
\end{thm}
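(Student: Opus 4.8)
The plan is as follows. The essential observation is that the translational null-direction $\mu_x$ lies in the kernel of $\mathcal{L}[\mu]$ and, by construction, has exactly two zeros on $[0,T)$; hence by the Sturm--Liouville oscillation theory recalled above the zero eigenvalue must coincide with either $\lambda_1$ or $\lambda_2$. In particular the Morse index of $\mathcal{L}[\mu]$ is automatically either one (if $0=\lambda_1$) or two (if $0=\lambda_2$), so that the entire content of the theorem is to locate the zero eigenvalue precisely --- i.e.\ to decide whether $0=\lambda_1<\lambda_2$, $0=\lambda_1=\lambda_2$, or $\lambda_1<0=\lambda_2$ --- and to match these three alternatives with the sign of the Jacobian $\{T,\omega_1\}_{E,c}$. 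I would carry this out following the parameter-differentiation methodology of \cite{J09}.

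First I would differentiate the Euler--Lagrange identity from Lemma \ref{L:Lagrange}. Since $\mu(\cdot;a,E,c)$ satisfies $\frac{\delta\Lambda}{\delta m}(\mu)=0$ with $\omega_1=\omega_1(a,E,c)$ and $\omega_2=\omega_2(a)$, and since $E(m)$ is linear in $m$, differentiating in $E$ and in $c$ yields the inhomogeneous relations
\[
\mathcal{L}[\mu]\,\partial_E\mu=(\partial_E\omega_1)\,f,\qquad \mathcal{L}[\mu]\,\partial_c\mu=(\partial_c\omega_1)\,f,\qquad f:=\tfrac{\delta F_1}{\delta m}(\mu)=\tfrac1b\mu^{1/b-1}>0,
\]
together with $\mathcal{L}[\mu]\mu_x=0$; crucially no inhomogeneous $F_2$-term appears on the right-hand side, precisely because $\omega_2$ is independent of $E$ and $c$. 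Because the period $T=T(a,E,c)$ itself varies with the parameters, the functions $\partial_E\mu,\partial_c\mu$ are not $T$-periodic; differentiating the identity $\mu(x+T;a,E,c)=\mu(x;a,E,c)$ gives the shear relations $\partial_E\mu(x+T)-\partial_E\mu(x)=-(\partial_E T)\mu_x(x)$ and likewise for $\partial_c$. This is exactly where the period function and its parameter derivatives enter.

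Next I would form the combination $w:=(\partial_c\omega_1)\,\partial_E\mu-(\partial_E\omega_1)\,\partial_c\mu$, which cancels the forcing and hence solves the homogeneous equation $\mathcal{L}[\mu]w=0$; by the shear relations it satisfies $w(x+T)-w(x)=-\{T,\omega_1\}_{E,c}\,\mu_x(x)$. Thus $w$ is a second solution of the second-order ODE $\mathcal{L}[\mu]y=0$ whose failure to be periodic is measured exactly by the Jacobian. When $\{T,\omega_1\}_{E,c}\neq0$ the function $w$ is genuinely non-periodic, hence independent of $\mu_x$, so the periodic kernel is one-dimensional and $0$ is simple; when $\{T,\omega_1\}_{E,c}=0$ the function $w$ is $T$-periodic and (using that $\partial_E\omega_1,\partial_c\omega_1\neq0$, as recorded in Remark \ref{r:LagrangeMult}, which prevents $w$ from collapsing onto a multiple of $\mu_x$) provides a second, linearly independent periodic null vector, so $0$ is a double eigenvalue. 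This last situation is case (ii), and it forces $0=\lambda_1=\lambda_2$, leaving exactly one negative eigenvalue.

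The remaining, and main, difficulty is to convert the \emph{sign} of the secular coefficient in the non-degenerate case into the dichotomy $0=\lambda_1$ versus $0=\lambda_2$. The quantity invariant under rescaling the second solution is not the secular coefficient itself but its ratio with the (constant) Wronskian $W[\mu_x,w]$; I would therefore compute $W[\mu_x,w]$, show it has a fixed sign, and relate $\{T,\omega_1\}_{E,c}/W[\mu_x,w]$ to the derivative in $\lambda$ of the Floquet discriminant at $\lambda=0$, whose sign distinguishes the right band-edge $\lambda_1$ from the left band-edge $\lambda_2$. This sign correspondence is a general fact about Hill-type operators, and I expect it to be the technical heart of the argument --- presumably the result deferred to Appendix \ref{S:Appendix} --- requiring care both with the weight arising from the first-order term in \eqref{e:L} and with the orientation conventions. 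Granting it, the three alternatives for $\{T,\omega_1\}_{E,c}$ map onto cases (i)--(iii), and the simplicity and strict positivity of the remaining spectrum follow from the oscillation theory already in place.
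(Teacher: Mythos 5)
Your proposal follows essentially the same route as the paper: differentiate the Euler--Lagrange identity in $E$ and $c$ (noting that only an $F_1$-forcing appears since $\omega_2=\omega_2(a)$), form the combination $\{\mu,\omega_1\}_{E,c}$ to cancel the forcing and obtain a second homogeneous solution whose secular growth across one period is $-\{T,\omega_1\}_{E,c}\,\mu_x$, and then convert the sign of that secular coefficient into the eigenvalue count via Floquet theory. Two remarks on where you and the paper diverge. First, the paper does not work with the Hill discriminant directly on the Sturm--Liouville operator $\mathcal{L}[\mu]$; instead it conjugates $\mathcal{L}[\mu]$ to a Schr\"odinger operator $\mathcal{M}=-\partial_x^2+Q$ via the change of variables \eqref{e:cov}, invokes Sylvester's inertia law (Lemma \ref{L:Sylvester}) to transfer the inertia, and then applies a ready-made Floquet result for Schr\"odinger operators (Lemma \ref{L:Floquet}, from \cite{N09}). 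Your plan to compute the Wronskian $W[\mu_x,w]$ and relate $\{T,\omega_1\}_{E,c}/W$ to the $\lambda$-derivative of the discriminant is the same mathematics that underlies that cited lemma, but if you stay with $\mathcal{L}[\mu]$ in its raw form you must verify the discriminant machinery for operators with a first-order term (or a weight), which is exactly the bookkeeping the conjugation is designed to avoid. Second, you correctly identify the orientation/normalization issue as the technical heart, but the content of Appendix \ref{S:Appendix} is not the discriminant fact itself: it is the pair of sign conditions $\mu_{xx}(0)<0$ and $\{\mu_+,\omega_1\}_{E,c}>0$, which are precisely the normalization constants needed so that, after rescaling $\mu_x$ and $\{\mu,\omega_1\}_{E,c}$ to the standard fundamental basis at $x=0$, the sign of the secular coefficient $\theta$ agrees with the sign of $\{T,\omega_1\}_{E,c}$ --- this is your ``ratio with the Wronskian'' in disguise. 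One small correction: in case (ii) the linear independence of $w=\{\mu,\omega_1\}_{E,c}$ from $\mu_x$ is most cleanly seen by parity ($w$ is even, $\mu_x$ is odd) together with $w(0)=\{\mu_+,\omega_1\}_{E,c}\neq 0$; the nonvanishing of $\partial_E\omega_1$ and $\partial_c\omega_1$ alone does not rule out $w\equiv 0$ or $w\parallel\mu_x$.
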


\begin{rem}\label{R:morse}
The sign of the quantity $\{T,\omega_1\}_{E,c}$ can be analytically determined to be positive for at least some waves in the integrable cases $b=2$ and $b=3$, corresponding
to the classical Camassa-Holm equation and the Degasperis-Procesi equations, respectively.  This is discussed in detail in Appendix \ref{S:A1}.  While the forthcoming
stability analysis requires $\{T,\omega_1\}_{E,c}>0$, it is an interesting open question to determine whether failure of this condition is associated with
instability of the wave, or if stability is still possible in this case.
\end{rem}

The general strategy for the proof is similar to that given in \cite[Theorem 4]{GMNP22}, and relies on a well-known result from Floquet theory as well
as Sylvester's Inertial Law theorem.  Before we proceed with the proof of Theorem \ref{T:morse}, we state these auxiliary results.

\begin{lem}\label{L:Floquet}\cite{N09}
Consider the Schr\"odinger operator $\mathcal{M}=-\partial_x^2+Q(x)$ with an even, $T$-periodic, smooth potential $Q$.  
Assume that there exists linearly independent functions $\phi_1,\phi_2\in L^2_{\rm per}(0,T)$ that are solutions of $\mathcal{M}w=0$
and such that there exists constant $\theta\in\RM$ such that
\[
\phi_1(x+T) = \phi_1(x) + \theta \phi_2(x)~~{\rm and}~~
\phi_2(x+T) = \phi_2(x).
\]
Further,  suppose that $\phi_2$ has two zeros on $[0,T)$. The zero eigenvalue of $\mathcal{M}$ is simple if $\theta\neq 0$ and double if $\theta=0$.
Furthermore, $\mathcal{M}$ has one negative eigenvalue if $\theta\geq 0$ and two negative eigenvalues if $\theta<0$.
\end{lem}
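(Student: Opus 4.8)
The plan is to recognize $\lambda=0$ as a periodic eigenvalue of the Hill operator $\mathcal{M}$ and to locate it within the ordered periodic spectrum $\lambda_0<\lambda_1\leq\lambda_2<\lambda_3\leq\cdots$ by combining the classical oscillation (Haupt) theorem with a Floquet-discriminant computation. First I would observe that, since $\phi_2$ is a nontrivial $T$-periodic solution of $\mathcal{M}w=0$, the value $\lambda=0$ is a periodic eigenvalue of $\mathcal{M}$ with $\phi_2$ a corresponding eigenfunction. Because $\phi_2$ has exactly two zeros on $[0,T)$, the oscillation theorem forces $0\in\{\lambda_1,\lambda_2\}$ (a periodic eigenfunction with exactly two zeros corresponds to the first nonzero pair). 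The whole statement then reduces to deciding three alternatives: $\lambda_1=\lambda_2=0$, or $0=\lambda_1<\lambda_2$, or $\lambda_1<\lambda_2=0$. In the first case the only eigenvalue below $0$ is $\lambda_0$ and $0$ is double; in the second, again only $\lambda_0$ lies below $0$ but $0$ is simple; in the third, both $\lambda_0,\lambda_1<0$ and $0$ is simple. These are exactly the three conclusions to be proven, so the work is to match each alternative to the sign of $\theta$.

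Next I would settle the multiplicity, which is the easy half. The $L^2_{\mathrm{per}}$-multiplicity of the periodic eigenvalue $0$ equals the dimension of the space of $T$-periodic solutions of $\mathcal{M}w=0$, i.e.\ the dimension of the eigenspace of the monodromy matrix $M(0)$ for its (double) eigenvalue $1$. Since $\phi_2$ is always periodic, this dimension is $2$ precisely when $\phi_1$ is also periodic, which by $\phi_1(x+T)=\phi_1(x)+\theta\phi_2(x)$ happens exactly when $\theta=0$; equivalently $M(0)=I$. Thus $\theta=0$ yields a double zero eigenvalue with $\lambda_1=\lambda_2=0$ (hence one negative eigenvalue, namely $\lambda_0$), while $\theta\neq0$ yields a simple zero eigenvalue together with a genuine Jordan block $M(0)=I+N$, $N\neq0$ nilpotent. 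This already proves case (ii) and the kernel-dimension assertions in (i) and (iii).

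The crux is to distinguish $0=\lambda_1$ from $0=\lambda_2$ when $\theta\neq0$, which I would resolve through the Floquet discriminant $\Delta(\lambda)=\operatorname{tr}M(\lambda)$. Since $|\Delta|\leq2$ on spectral bands and $\Delta\geq2$ on periodic gaps, and since a simple periodic eigenvalue is a transversal crossing of $\Delta=2$ ($\Delta'\neq0$), one has $0=\lambda_1$ (lower gap edge) iff $\Delta'(0)>0$ and $0=\lambda_2$ (upper gap edge) iff $\Delta'(0)<0$. It then remains to show $\operatorname{sign}\Delta'(0)=\operatorname{sign}(\theta W)$, where $W=\phi_1\phi_2'-\phi_1'\phi_2$ is the (constant) Wronskian. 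I would compute $\Delta'(0)$ by differentiating the monodromy of the first-order system in $\lambda$: variation of parameters gives $\Delta'(0)=\operatorname{tr}\bigl(N J G\bigr)$, where $J=\left(\begin{smallmatrix}0&1\\-1&0\end{smallmatrix}\right)$ and $G$ is the positive-definite Gram matrix of the normalized fundamental solutions $c,s$ (with $c(0)=1,c'(0)=0$ and $s(0)=0,s'(0)=1$) at $\lambda=0$. Since $N$ is trace-free and nilpotent, $NJ$ is a rank-one symmetric matrix, so with $G>0$ the sign of $\Delta'(0)=\operatorname{tr}\bigl((NJ)G\bigr)$ equals the sign $\varepsilon$ of the single nonzero eigenvalue of $NJ$. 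Finally, evaluating the symplectic pairing $v_1\wedge(Nv_1)$ on the initial-data vector $v_1=(\phi_1(0),\phi_1'(0))$ gives simultaneously $v_1\wedge(Nv_1)=\theta W$ and a quadratic form in $v_1$ whose definiteness is again $\varepsilon$; hence $\operatorname{sign}(\theta W)=\varepsilon=\operatorname{sign}\Delta'(0)$. With the convention $W>0$ this reads $\operatorname{sign}\theta=\operatorname{sign}\Delta'(0)$, so $\theta>0$ forces $0=\lambda_1$ (one negative eigenvalue) and $\theta<0$ forces $0=\lambda_2$ (two negative eigenvalues), completing (i) and (iii).

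I expect the main obstacle to be the sign bookkeeping in the last paragraph rather than any single hard estimate. Two points require care. First, $\theta$ alone is only meaningful up to the scaling freedom in $(\phi_1,\phi_2)$, and one must track that the genuinely invariant object is $\operatorname{sign}(\theta W)$ (equivalently, one fixes $W>0$), so that the stated dichotomy in $\operatorname{sign}\theta$ is well posed. Second, one must justify the band-edge derivative dichotomy cleanly, i.e.\ that a simple periodic eigenvalue is a transversal crossing of $\Delta=2$ with the adjacent band on the side where $\Delta<2$. Both facts are standard in Floquet theory but must be pinned down to make the reduction airtight; once $\operatorname{sign}(\theta W)=\operatorname{sign}\Delta'(0)$ is in hand, the oscillation theorem supplies the remainder.
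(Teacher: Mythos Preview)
The paper does not prove this lemma at all: it is quoted from \cite{N09} and used as a black box in the proof of Theorem~\ref{T:morse}.  So there is no ``paper's own proof'' to compare against, and your proposal is in effect supplying what the paper outsources.

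Your argument is correct.  The localization $0\in\{\lambda_1,\lambda_2\}$ via Haupt's oscillation theorem and the multiplicity dichotomy from the Jordan structure of $M(0)$ are standard and clean.  The discriminant computation can be streamlined: with normalized solutions $c,s$ and Gram matrix $G=\left(\begin{smallmatrix}\int c^2&\int cs\\ \int cs&\int s^2\end{smallmatrix}\right)$, variation of parameters gives $\Delta'(0)=\operatorname{tr}\bigl((M(0)-I)JG\bigr)$, and writing $N=M(0)-I=(\theta/W)\,(\gamma,\delta)^{\!\top}(\delta,-\gamma)$ (where $(\gamma,\delta)=(\phi_2(0),\phi_2'(0))$) one finds directly
\[
\Delta'(0)=\frac{\theta}{W}\int_0^T\phi_2^2\,dx,
\]
which makes the sign conclusion immediate without the intermediate $\varepsilon$-bookkeeping.

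Your caveat about the sign of $W$ is well taken and is a genuine ambiguity in the lemma as stated: only $\operatorname{sign}(\theta)\operatorname{sign}(W)$ is intrinsic, so the conclusion ``$\theta\ge 0$ gives one negative eigenvalue, $\theta<0$ gives two'' tacitly assumes $W(\phi_1,\phi_2)>0$.  In the paper's application this is harmless because the specific $y_1,y_2$ constructed there are normalized so that the relevant orientation is fixed; but as a stand-alone lemma it would be cleaner to state the dichotomy in terms of $\operatorname{sign}(\theta W)$.  Finally, note that your proof nowhere uses the evenness of $Q$; that hypothesis is included only because it holds in the application and is not needed for the lemma itself.
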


\begin{lem}[Sylvester's Inertial Law \cite{L03}]\label{L:Sylvester}
Let $L$ be a self-adjoint operator on a Hilbert space $H$, and let $\mathcal{S}$ be a bounded, invertible operator on $H$.  Then the operators $L$ and $\mathcal{S}L\mathcal{S}^*$
have the same inertia, i.e. the dimensions of the negative, null, and positive invariant subspaces of $H$ for these two operators are the same.
\end{lem}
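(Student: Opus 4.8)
The plan is to deduce the lemma by passing from the (spectral) notion of inertia to a variational description in terms of the quadratic forms $q_L(x):=\langle Lx,x\rangle$ and $q_M(x):=\langle Mx,x\rangle$, where $M=\mathcal{S}L\mathcal{S}^*$, and then exploiting the fact that these forms transform covariantly under congruence. Let $E$ denote the spectral measure of $L$, let $P_-,P_0,P_+$ be its spectral projections onto $(-\infty,0)$, $\{0\}$, $(0,\infty)$, and set $n_-(L):=\dim\mathrm{Ran}(P_-)$, $n_0(L):=\dim\ker L$, $n_+(L):=\dim\mathrm{Ran}(P_+)$; define $n_\bullet(M)$ analogously from the spectral projections of $M$. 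These (possibly infinite) cardinals are exactly the dimensions of the negative, null, and positive invariant subspaces appearing in the statement, and the goal is to show $n_\bullet(L)=n_\bullet(M)$ for $\bullet\in\{-,0,+\}$.

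First I would establish the variational characterization
\[
n_-(L)=\sup\{\dim V : V\subseteq H \text{ a subspace with } q_L(x)<0 \text{ for all } x\in V\setminus\{0\}\},
\]
together with the analogous statement for $n_+(L)$, the identity $n_0(L)=\dim\ker L$ being immediate. The bound $n_-(L)\ge\dim\mathrm{Ran}(P_-)$ follows from the spectral theorem once one checks that $q_L$ is \emph{strictly} negative definite on $\mathrm{Ran}(P_-)$: for nonzero $x$ in that range the scalar measure $\mu_x(\cdot)=\langle E(\cdot)x,x\rangle$ is a nonzero finite measure supported in $(-\infty,0)$, so $q_L(x)=\int\lambda\,d\mu_x<0$ because the integrand is strictly negative $\mu_x$-almost everywhere. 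For the reverse inequality, any subspace $V$ on which $q_L<0$ must meet $\mathrm{Ran}(E([0,\infty)))$ only in $\{0\}$, since $q_L\ge 0$ there; this forces $\dim V\le\mathrm{codim}\,\mathrm{Ran}(E([0,\infty)))=\dim\mathrm{Ran}(P_-)$. The positive index is handled by replacing $L$ with $-L$, and the null index needs no form argument.

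With the variational description in hand, the congruence step is short. Since $\mathcal{S}$ is bounded and invertible, so is $\mathcal{S}^*$, and a direct computation gives the key identity $q_M(x)=\langle\mathcal{S}L\mathcal{S}^*x,x\rangle=\langle L\mathcal{S}^*x,\mathcal{S}^*x\rangle=q_L(\mathcal{S}^*x)$. Hence $\mathcal{S}^*$ carries any subspace $V$ with $q_M<0$ to a subspace $\mathcal{S}^*V$ with $q_L<0$, bijectively and preserving dimension, so $n_-(M)\le n_-(L)$. Applying the same reasoning to the identity $L=\mathcal{S}^{-1}M(\mathcal{S}^{-1})^*$ (legitimate because $\mathcal{S}^{-1}$ is again bounded and invertible) yields $n_-(L)\le n_-(M)$, hence equality; the positive index is identical. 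For the null index, invertibility of $\mathcal{S}$ gives $\ker M=(\mathcal{S}^*)^{-1}(\ker L)$, so $\mathcal{S}^*$ restricts to a linear isomorphism $\ker M\to\ker L$ and $n_0(M)=n_0(L)$.

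I expect the only genuine obstacle to be the infinite-dimensional content of the variational characterization — specifically the verification that $q_L$ is strictly negative definite on $\mathrm{Ran}(P_-)$ even when $0$ is an accumulation point of $\spec(L)$, which is precisely where the measure-theoretic estimate $\int\lambda\,d\mu_x<0$ is required in place of a naive eigenvalue bound. A secondary bookkeeping point is that all the dimensions here may be infinite cardinals, so the dimension count in the reverse bound and the transport arguments should be read as cardinal statements; the proofs via trivial intersection with $\mathrm{Ran}(E([0,\infty)))$ and via the dimension-preserving bijection $\mathcal{S}^*$ remain valid verbatim in that generality.
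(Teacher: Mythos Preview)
The paper does not supply its own proof of this lemma; it is quoted as a known result with a citation to \cite{L03}. Your argument is a correct and standard proof, proceeding via the variational characterization of the negative and positive indices in terms of maximal subspaces of definiteness for the quadratic form, together with the congruence identity $q_M(x)=q_L(\mathcal{S}^*x)$ and the observation that $\mathcal{S}^*$ is a dimension-preserving linear bijection. One minor caveat worth flagging: in the paper's application $L=\mathcal{M}=-\partial_x^2+Q(x)$ is an unbounded self-adjoint operator, so the quadratic form $q_L(x)=\langle Lx,x\rangle$ should strictly be read on the form domain rather than on all of $H$; your spectral-measure argument for strict negativity on $\mathrm{Ran}(P_-)$ and the dimension comparison via trivial intersection with $\mathrm{Ran}(E([0,\infty)))$ carry over verbatim to that setting, so this is only a matter of phrasing.
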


With these results in hand, we now establish Theorem \ref{T:morse}.

\begin{proof}[Proof of Theorem \ref{T:morse}]
The basic strategy, which again is similar to that in \cite{GMNP22}, is to first show that the Sturm-Liouville operator $\mathcal{L}[\mu]$ 
can be written as
\[
\mathcal{L}[\mu]=\mathcal{S}\mathcal{M}\mathcal{S}^*
\]
for some linear Schr\"odinger operator $\mathcal{M}$ (as in Lemma \ref{L:Floquet} above) and some bounded, invertible operator $\mathcal{S}$ (as in
Lemma \ref{L:Sylvester} above).  To this end, first note by \eqref{e:L} the spectral problem $\mathcal{L}[\mu]v=\lambda v$ can be written as
\[
p(x)v''+q(x)v'+\left(r(x)+\lambda\right)=0.
\]
We now define the function
\[
D(x)=\int_0^x\frac{q(s)}{p(s)}ds=\ln\left(\left(\frac{c-\phi(x)}{c-\phi(0)}\right)^{-(2b+1)}\right)
\]
and making the change of variables
\begin{equation}\label{e:cov}
v(x) = w(x) e^{-D(x)/(4b+2)}
\end{equation}
a straightforward calculation shows that 
\[
-w''+Q(x)w=\lambda(c-\phi)^{-1} w,
\] 
where here $Q$ is smooth, $T$-periodic and even\footnote{The function $Q(x)$ can be explicitly expressed in terms of the functions 
$p(x)$, $q(x)$, $r(x)$ and $\phi(x)$ above.  However, the expression is quite lengthy and unnecessary to our calculations and is hence omitted.}.  

Defining the linear Schr\"odinger operator $\mathcal{M}:=-\partial_x^2+Q(x)$ and 
defining the multiplication operator $\mathcal{S}:=(c-\phi)^{1/2}$, which is clearly bounded and invertible, it follows that 
$\lambda\in\RM$ is an eigenvalue of $\mathcal{L}[\mu]$ if and only if it is an eigenvalue of $\mathcal{S}\mathcal{M}\mathcal{S}^*$.
By Sylverster's inertial law theorem it follows that $\mathcal{S}\mathcal{M}\mathcal{S}^*$, and by extension $\mathcal{L}[\mu]$, necessarily
has the same inertia as the operator $\mathcal{M}$.  

It remains to determine the inertia of the operator $\mathcal{M}$.  To this end, we aim to build functions $\phi_1$ and $\phi_2$ as in Lemma \ref{L:Floquet}
by first building corresponding functions for the operator $\mathcal{L}[\mu]$.  First, notice that $\mathcal{L}[\mu]\mu_x=0$ and
\[
\mathcal{L}[\mu]\mu_E=\frac{\partial \omega_1}{\partial E}\frac{\partial F_1}{\partial m}(\mu),
	~~\mathcal{L}[\mu]\mu_c=\frac{\partial\omega_1}{\partial c}\frac{\partial F_1}{\partial m}(\mu),
\]
where the last two relations come from differentiating the profile equation $\Lambda'(\mu)=0$ with respect to the parameters $E$ and $c$, respectively.
It follows that the functions
\[
\mu_x~~~{\rm and}~~~\frac{\partial \omega_1}{\partial c}\mu_E-\frac{\partial \omega_1}{\partial E}\mu_c=\{\mu,\omega_1\}_{E,c}
\]
provide  two linearly independent solutions, the first being odd and the second even, of the equation $\mathcal{L}[\mu]v=0$.  
We now define the functions
\[
y_1(x) =\frac{\{\mu(x),\omega_1\}_{E,c}}{\{\mu_+,\omega_1\}_{E,c}},~~~y_2(x)=\frac{\mu_x(x)}{\mu_{xx}(0)},
\]
which are well-defined thanks to the technical result in Lemma \ref{L:technical} (see Appendix \ref{S:Appendix}) and are linearly independent by parity,
and note that $y_2$ is $T$-periodic while $y_1$ satisfies
\begin{equation}\label{e:y1_identity}
y_1(x+T)=y_1(x)+\theta y_2(x),~~\theta=y_1'(T).
\end{equation}
To see the above equality, note that differentiating the identity $\mu(x+T)=\mu(x)$ with respect to the parameters $E$ and $c$ gives
\[
\mu_E(x+T)-\mu_E(x) = -T_E\mu_x(x),~~{\rm and}~~\mu_c(x+T)-\mu_c(x)=-T_c\mu_x(x)
\]
and hence
\begin{equation}\label{e:y1_step1}
y_1(x+T)=y_1(x)+\left(\frac{\mu_x(x)}{\mu_{xx}(0)}\right)\left(-\mu_{xx}(0)\frac{\{T,\omega_1\}_{E,c}}{\{\mu_+,\omega_1\}_{E,c}}\right)
\end{equation}
Similarly, differentiating the relation $\mu_x(T)=0$ with respect to $E$ and $c$ and using the $T$-periodicity of $\mu(x)$ gives
\[
\mu_{Ex}(T)=\mu_{xx}(T)T_E=\mu_{xx}(0)T_E~~~{\rm and}~~~\mu_{cx}(T)=\mu_{xx}(0)T_c
\]
and hence
\[
\theta=-\mu_{xx}(0)\frac{\{T,\omega_1\}_{E,c}}{\{\mu_+,\omega_1\}_{E,c}},
\]
which, along with \eqref{e:y1_step1}, verifies \eqref{e:y1_identity}.  We further note that since Lemma \ref{L:technical} in Appendix \ref{S:Appendix} implies
that  $\mu_{xx}(0)<0$ and $\{\mu_+,\omega_1\}_{E,c}>0$, it follows that
\[
{\rm sign}\left(\theta\right)={\rm sign}\left(\{T,\omega_1\}_{E,c}\right).
\]
Of specific note, this shows that $y_1$ provides a second linearly independent solution of $\mathcal{L}[\mu]v=0$ if and only if $\{T,\omega_1\}_{E,c}=0$.  

Now, observing from that the change of variables \eqref{e:cov} that $v(0)=w(0)$ and $v'(0)=w'(0)$, we define
\[
\phi_1(x)=\left(\frac{c-\phi}{c-\phi\left(0\right)}\right)^{-1/2}y_1(x)~~{\rm and}~~\phi_2(x)=\left(\frac{c-\phi}{c-\phi\left(0\right)}\right)^{-1/2}y_2(x)
\]
and note that, by construction, $\phi_{1,2}$ are linearly independent solutions of the differential equation $\mathcal{M}w=0$ and satisfy
\[
\phi_1(x+T)=\phi_1(x)+\theta\phi_2(x),~~~\phi_2(x+T)=\phi_2(x)
\]
for all $x\in\RM$, where $\theta$ is as in \eqref{e:y1_identity}.  Since $\phi_2$ has precisely two roots in $[0,T)$ by construction,
it  follows from Lemma \ref{L:Floquet} that the zero eigenvalue of $\mathcal{M}$ is simple if and only if $\theta\neq 0$.  If
$\theta\neq 0$, then $\lambda=0$ is a simple eigenvalue of $\mathcal{M}$ and, again by Lemma \ref{L:Floquet},
$\mathcal{M}$ has either one or two negative eigenvalues depending on whether $\theta>0$ or $\theta<0$, respectively.
The proof is complete by recalling that, by Sylverster's inertial law theorem, the operators $\mathcal{M}$ and $\mathcal{S}\mathcal{M}\mathcal{S}^*$, and hence $\mathcal{L}[\mu]$
by the change of variables \eqref{e:cov}, have the same inertia.
\end{proof}

\begin{rem}
It is interesting to note that the function $y_1$ would be considerably simpler if the Lagrange multiplier $\omega_1$ depended only one one of the traveling wave parameters
$E$ or $c$.  This simplification occurs in many other nonlinear dispersive equations such as  KdV type equations and even in the integrable 
cases $b=2$ and $b=3$ of the b-CH equation \eqref{e:bch}:  see, for example, the works \cite{BrJK11,BrJK14,J09,J_BBM,	GMNP22,GP22}.  In these other cases,
where the Lagrange multipliers do not depend on several traveling wave parameters, the condition in Theorem \ref{T:morse} simplifies greatly.  
For example, for generalized KdV
type equations one has $\omega_1=c$ and hence the number of negative eigenvalues of $\mathcal{L}[\mu]$ depends only on $T_E$, i.e. it depends on the monotonicity of the
period function with respect to the ODE energy.  There is a long history of studying period monotonicity with respect to such single parameters: see, for example,
\cite{Chicone87,Schaaf85}.  The present situation is more complicated due to the fact that the Lagrange multipliers depend on several traveling wave parameters.
Nevertheless, the sign of $\{T,\omega\}_{E,c}$ can be determined for some waves in the integrable cases $b=2$ and $b=3$: see Remark \ref{R:morse} and Appendix
\ref{S:A1}.
\end{rem}

Throughout the remainder of our stability analysis, we will assume that\footnote{We note again that, as discussed in Remark \ref{R:morse}, this condition
can be analytically verified for some waves in the integrable cases $b=2$ and $b=3$.}
\begin{equation}\label{e:assume1}
\{T,\omega_1\}_{E,c}>0.
\end{equation}
From Theorem \ref{T:morse} it follows that our given $T$-periodic traveling wave solution $\mu$ is a degenerate saddle point of the functional $\Lambda$
on $L^2_{\rm per}(0,T)$, with one unstable direction and one neutral direction.  To accommodate these negative and null directions, we note that the
evolution of \eqref{e:bch_m} does not occur on all of $L^2_{\rm per}(0,T)$ but rather on the co-dimension two submanifold
\[
\Sigma_0:=\left\{m\in H^1_{\rm per}(0,T):F_1(m)=F_1(\mu(\cdot;a,E,c)),~~F_2(m)=F_2(\mu(\cdot;a,E,c))\right\}.
\]
Further, since the evolution of \eqref{e:bch_m} remains invariant under the one-parameter group of isometries corresponding to spatial translations,
this motivates us to define the group orbit of $\mu\in\Sigma_0$ as
\[
\mathcal{O}_\mu:=\left\{\mu(\cdot-x_0):x_0\in\RM\right\}.
\]
We note that $\mathcal{O}_\mu\subset\Sigma_0$ and that a solution of \eqref{e:bch_m} with initial data in $\Sigma_0$ will remain in $\Sigma_0$
for all future times.  Our strategy moving forward will be to demonstrate the ``convexity" of the functional $\Lambda$ in \eqref{e:LambdaOmega}
on the nonlinear manifold $\Sigma_0$ in a neighborhood of the group orbit $\mathcal{O}_\mu$.

To this end, we define
\[
\mathcal{T}_0:=\left\{m\in H^1_{\rm per}(0,T):\left<m,\frac{\partial F_1}{\partial m}(\mu)\right>=\left<m,\frac{\partial F_2}{\partial m}(\mu)\right>=0\right\}
\]
and note that $\mathcal{T}_0$ is precisely the tangent space in $H^1_{\rm per}(0,T)$ to the submanifold $\Sigma_0$ at the point $\mu$.  
Our next result establishes the positivity of the linear operator $\mathcal{L}[\mu]$ on the subspace of the tangent space $\mathcal{T}_0$  that is orthogonal
to the kernel of $\mathcal{L}[\mu]$.

\begin{lem}\label{L:coercive_L2}
Assume that $\{T,\omega_1\}_{E,c}>0$ and that the product $\{T,F_1\}_{E,c}\{T,F_1,F_2\}_{a,E,c}$ is negative.  Then
\[
\inf\left\{\left<\mathcal{L}[\mu]m,m\right>: \|m\|_{L^2_{\rm per}(0,T)}=1,~~m\in\mathcal{T}_0,~~m\perp \mu_x\right\}>0.
\]
In particular, there exists a constant $C>0$ such that
\[
\left<\mathcal{L}[\mu]m,m\right>\geq C\|m\|_{L^2(0,T)}^2
\]
for all $m\in\mathcal{T}_0$ with $m\perp\mu_x$.
\end{lem}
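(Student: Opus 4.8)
The plan is to realize $\mathcal{L}[\mu]$ on the constrained space $\mathcal{T}_0\cap\{\mu_x\}^\perp$ as a positive operator by means of the standard index count for a self-adjoint operator with compact resolvent subject to finitely many linear constraints. Recall from Theorem \ref{T:morse}(i) that under $\{T,\omega_1\}_{E,c}>0$ the operator $\mathcal{L}[\mu]$ has exactly one negative eigenvalue, a simple zero eigenvalue spanned by $\mu_x$, and the remainder of its spectrum strictly positive and bounded away from zero. Writing $g_1:=\frac{\partial F_1}{\partial m}(\mu)$, $g_2:=\frac{\partial F_2}{\partial m}(\mu)$, and $n(\cdot),z(\cdot)$ for the number of negative eigenvalues and the dimension of the kernel, note first that $\mathcal{T}_0=\{g_1,g_2\}^\perp$ contains $\ker\mathcal{L}[\mu]=\operatorname{span}\{\mu_x\}$: since $F_1,F_2$ are translation invariant, $\langle g_j,\mu_x\rangle=\partial_{x_0}F_j(\mu(\cdot-x_0))\big|_{x_0=0}=0$. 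In particular $g_1,g_2\perp\ker\mathcal{L}[\mu]=\operatorname{Range}(\mathcal{L}[\mu])^\perp$, so their preimages under $\mathcal{L}[\mu]$ exist, and the count of nonpositive directions on $\mathcal{T}_0$ is governed by the $2\times 2$ symmetric constraint matrix $\mathcal{D}:=\big(\langle \mathcal{L}[\mu]^{-1}g_i,g_j\rangle\big)_{i,j=1,2}$, where $\mathcal{L}[\mu]^{-1}$ is the inverse on $\{\mu_x\}^\perp$, via $n(\mathcal{L}[\mu]|_{\mathcal{T}_0})=n(\mathcal{L}[\mu])-n(\mathcal{D})-z(\mathcal{D})$ and $z(\mathcal{L}[\mu]|_{\mathcal{T}_0})=z(\mathcal{L}[\mu])+z(\mathcal{D})$. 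Positivity on $\mathcal{T}_0\cap\{\mu_x\}^\perp$ is thus equivalent to $\mathcal{D}$ having signature $(1,1)$.

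The heart of the argument is the explicit evaluation of $\mathcal{D}$. Differentiating the profile identity $\Lambda'(\mu)=0$ in the parameters $(a,E,c)$ and using $\partial_E\omega_2=\partial_c\omega_2=0$ gives $\mathcal{L}[\mu]\mu_E=\partial_E\omega_1\,g_1$, $\mathcal{L}[\mu]\mu_c=\partial_c\omega_1\,g_1$, and $\mathcal{L}[\mu]\mu_a=\partial_a\omega_1\,g_1+\partial_a\omega_2\,g_2$, exactly as in the proof of Theorem \ref{T:morse}. Because the profiles have a parameter-dependent period, $\mu_a,\mu_E,\mu_c$ are only quasi-periodic, obeying $\mu_p(x+T)-\mu_p(x)=-T_p\mu_x(x)$, so to produce genuine $L^2_{\rm per}(0,T)$ preimages I will pass to periodic combinations. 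The combination $\{\mu,T\}_{E,c}=T_c\mu_E-T_E\mu_c$ is $T$-periodic and satisfies $\mathcal{L}[\mu]\{\mu,T\}_{E,c}=-\{T,\omega_1\}_{E,c}\,g_1$, so that $\mathcal{L}[\mu]^{-1}g_1=-\{\mu,T\}_{E,c}/\{T,\omega_1\}_{E,c}$, which is well defined precisely because $\{T,\omega_1\}_{E,c}>0$. Likewise, the simultaneous requirements of periodicity and of annihilating the $g_1$-component single out, up to scale, the combination $\zeta_2=\{T,\omega_1\}_{E,c}\mu_a+\{T,\omega_1\}_{c,a}\mu_E+\{T,\omega_1\}_{a,E}\mu_c$ (the ``cross product'' of the gradients of $T$ and $\omega_1$); one checks that the jump coefficient $\{T,\omega_1\}_{E,c}T_a+\{T,\omega_1\}_{c,a}T_E+\{T,\omega_1\}_{a,E}T_c$ and the $g_1$-coefficient both vanish as scalar triple products, leaving $\mathcal{L}[\mu]\zeta_2=\{T,\omega_1\}_{E,c}\,\partial_a\omega_2\,g_2$, hence $\mathcal{L}[\mu]^{-1}g_2=\zeta_2/\big(\{T,\omega_1\}_{E,c}\,\partial_a\omega_2\big)$ modulo an immaterial multiple of $\mu_x$.

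Using the chain rule $\langle g_j,\mu_p\rangle=\partial_p F_j$ for $p\in\{a,E,c\}$, the entries of $\mathcal{D}$ collapse into ratios of Jacobians: a direct computation gives $\mathcal{D}_{11}=\{T,F_1\}_{E,c}/\{T,\omega_1\}_{E,c}$, while applying the Cauchy--Binet formula to $\det\mathcal{D}$ and recognizing the resulting combination of $2\times2$ minors as the cofactor expansion of a $3\times3$ determinant along its first row yields $\det\mathcal{D}=\{T,F_1,F_2\}_{a,E,c}/\big(\{T,\omega_1\}_{E,c}\,\partial_a\omega_2\big)$. Since $\partial_a\omega_2=\frac{(b-1)}{2b}a^{1/b-1}>0$ and $\{T,\omega_1\}_{E,c}>0$, the sign of $\mathcal{D}_{11}$ equals that of $\{T,F_1\}_{E,c}$ and the sign of $\det\mathcal{D}$ equals that of $\{T,F_1,F_2\}_{a,E,c}$. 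The hypothesis $\{T,F_1\}_{E,c}\{T,F_1,F_2\}_{a,E,c}<0$ forces $\mathcal{D}_{11}$ and $\det\mathcal{D}$ to have opposite signs. Now $\mathcal{D}$ is the Gram matrix of the form $m\mapsto\langle\mathcal{L}[\mu]m,m\rangle$ on the span of the two preimages, so its negative index cannot exceed $n(\mathcal{L}[\mu])=1$; consequently $\det\mathcal{D}>0$ would force $\mathcal{D}$ positive definite, giving $\mathcal{D}_{11}>0$ and a positive product, a contradiction. Hence $\det\mathcal{D}<0$, i.e. $n(\mathcal{D})=1$ and $z(\mathcal{D})=0$, and therefore $n(\mathcal{L}[\mu]|_{\mathcal{T}_0})=0$ with $z(\mathcal{L}[\mu]|_{\mathcal{T}_0})=1$, the latter kernel being spanned by $\mu_x$.

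Removing this single null direction, $\mathcal{L}[\mu]$ is positive definite on $\mathcal{T}_0\cap\{\mu_x\}^\perp$; since $\mathcal{L}[\mu]$ has compact resolvent, its restriction to this closed subspace has purely discrete spectrum, so the infimum in the statement is attained at the smallest constrained eigenvalue, which we have just shown to be strictly positive. This provides the coercivity constant $C>0$. I expect the main obstacle to lie in the second and third paragraphs: constructing the periodic preimages in the presence of the quasi-periodicity of $\mu_a,\mu_E,\mu_c$, verifying that $\zeta_2$ is simultaneously $T$-periodic and free of a $g_1$-component, and carrying out the bookkeeping that reduces $\det\mathcal{D}$ to the single $3\times3$ Jacobian $\{T,F_1,F_2\}_{a,E,c}$. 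Justifying the constraint-matrix index formula and the a priori bound $n(\mathcal{D})\le n(\mathcal{L}[\mu])$ in this periodic setting also requires the Sylvester-type/interlacing reasoning already invoked for Theorem \ref{T:morse}.
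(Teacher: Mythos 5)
Your argument is correct, but it takes a genuinely different route from the paper. The paper does not form the full $2\times 2$ constraint matrix $\mathcal{D}$ or invoke the negative-index counting formula $n(\mathcal{L}|_{\mathcal{T}_0})=n(\mathcal{L})-n(\mathcal{D})-z(\mathcal{D})$. Instead it exhibits a \emph{single} test function, $\psi=\{\mu,T,F_1\}_{a,E,c}$, chosen so that $\mathcal{L}[\mu]\psi=\{\omega_1,T,F_1\}_{a,E,c}\frac{\delta F_1}{\delta m}+(\omega_2)_a\{T,F_1\}_{E,c}\frac{\delta F_2}{\delta m}$ lies in the span of the constraints (hence $\langle\mathcal{L}[\mu]\psi,m\rangle=0$ for all $m\in\mathcal{T}_0$) and so that $\langle\mathcal{L}[\mu]\psi,\psi\rangle=(\omega_2)_a\{T,F_1\}_{E,c}\{T,F_1,F_2\}_{a,E,c}<0$ collapses to exactly the hypothesized product (the $F_1$-pairing drops out because $\{F_1,T,F_1\}_{a,E,c}=0$); it then concludes by decomposing $\psi$ and $m$ along the spectral subspaces of $\mathcal{L}[\mu]$ and applying Cauchy--Schwarz in the positive subspace. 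In your language, the paper verifies $n(\mathcal{D})\geq 1$ by producing one explicit negative direction and then runs the index-count argument by hand for that special case, never needing $\mathcal{L}[\mu]^{-1}g_2$, the off-diagonal entries, or the reduction of $\det\mathcal{D}$ to $\{T,F_1,F_2\}_{a,E,c}$. Your version is more systematic and yields more information (the separate signs $\mathcal{D}_{11}\propto\{T,F_1\}_{E,c}$ and $\det\mathcal{D}\propto\{T,F_1,F_2\}_{a,E,c}$, which explains why only their product enters the hypothesis), at the cost of citing the inertia formula and verifying the determinant identity $\{T,F_1\}_{E,c}\{T,\omega_1,F_2\}_{a,E,c}-\{T,F_2\}_{E,c}\{T,\omega_1,F_1\}_{a,E,c}=\{T,\omega_1\}_{E,c}\{T,F_1,F_2\}_{a,E,c}$; this identity does hold (it is the $a$-component of $\nabla T\times$ applied to the four-vector dependence relation in $\RM^3$), so your $\det\mathcal{D}$ is correct. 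One imprecision to repair: the chain rule gives $\langle g_j,\nabla_{(a,E,c)}\mu\rangle=\nabla_{(a,E,c)}F_j-(\text{integrand at }x=T)\,\nabla_{(a,E,c)}T$, not $\nabla F_j$ alone, because the period depends on the parameters; your final formulas survive only because every coefficient vector you contract against is of the form $\nabla T\times(\cdot)$ and hence annihilates the $\nabla T$ boundary term, and this should be said explicitly (the paper records the boundary term for exactly this reason).
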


\begin{proof}
We define the function
\[
\psi=\{\mu,T,F_1\}_{a,E,c}
\]
and note that $\psi$ is smooth, $T$-periodic and satisfies
\begin{align*}
\mathcal{L}[\mu]\psi
&= \mdet{(\omega_1)_a \frac{\partial F_1}{\partial m} + (\omega_2)_a \frac{\partial F_2}{\partial m}
& (\omega_1)_E \frac{\partial F_1}{\partial m}
& (\omega_1)_c \frac{\partial F_1}{\partial m} \\
T_a  & T_E & T_c \\
(F_1)_a  & (F_1)_E & (F_1)_c}\\
&= \{\omega_1,T,F_1\}_{a,E,c} \frac{\partial F_1}{\partial m}
+ (\omega_2)_a  \{T,F_1\}_{E,c} \frac{\partial F_2}{\partial m}.
\end{align*}
In particular, we have that $\left<\mathcal{L}[\mu]\psi,m\right>=0$ for all $m\in\mathcal{T}_0$.  Further, we have
\begin{align*}
\left<\mathcal{L}\psi,\psi\right>&=
\{\omega_1,T,F_1\}_{a,E,c}\left<\frac{\partial F_1}{\partial m},\{\mu, T, F_1\}_{a,E,c}\right>\\
&\quad 
+ (\omega_2)_a\{T,F_1\}_{E,c}\left<\frac{\partial F_2}{\partial m},
\{\mu,T,F_1\}_{a,E,c}\right>
\end{align*}
Noting that
\[
\int_0^T\frac{\partial F_j}{\partial m}\nabla_{(a,E,c)}\mu~dx=\nabla_{(a,E,c)}F_j-\mu(T)\nabla_{(a,E,c)}T
\]
for both $j=1,2$, we see that
\[
\left<\frac{\partial F_1}{\partial m},\{\mu, T, F_1\}_{a,E,c}\right>=\{F_1,T,F_1\}_{a,E,c}=0
\]
and
\[
\left<\frac{\partial F_2}{\partial m},\{\mu, T, F_1\}_{a,E,c}\right>=\{F_2,T,F_1\}_{a,E,c}=\{T,F_1,F_2\}_{a,E,c},
\]
and hence
\[
\left<\mathcal{L}[\mu]\psi,\psi\right>= (\omega_2)_a\{T,F_1\}_{E,c}\{T,F_1,F_2\}_{a,E,c}.
\]
Since $(\omega_2)_a>0$ (see Remark \ref{r:LagrangeMult}), it follows by assumption that the quantity $\left<\mathcal{L}[\mu]\psi,\psi\right>$ is negative.  

Now, let $m\in\mathcal{T}_0$ be such that $\left<m,\mu_x\right>=0$.  Due to Theorem \ref{T:morse},  the assumption $\{T,\omega_1\}_{E,c}>0$ implies we can 
write $\psi=\alpha\chi+\beta\mu_x+p$ and $m=A\chi+\widetilde{p}$ for some constants $\alpha,\beta,A\in\RM$, the function $\chi$ belongs to the negative invariant space for 
$\mathcal{L}[\mu]$ and functions $p$ and $\widetilde{p}$ belong to the positive invariant subspace of $\mathcal{L}[\mu]$.  It follows then that
\begin{equation}\label{e:want1}
\left<\mathcal{L}[\mu]m,m\right>=-\lambda^2 A^2+\left<\mathcal{L}[\mu]\widetilde{p},\widetilde{p}\right>,
\end{equation}
where here $-\lambda^2<0$ is the  negative eigenvalue associated to $\chi$.  Further, we have
\[
0=\left<\mathcal{L}[\mu]\psi,m\right>=-\lambda^2A\alpha+\left<\mathcal{L}[\mu]p,\widetilde{p}\right>
\]
and
\[
0>\left<\mathcal{L}[\mu]\psi,\psi\right>=-\lambda^2\alpha^2+\left<\mathcal{L}[\mu]p,p\right>
\]
and hence, noting that the bilinear form $\left<\mathcal{L}[\mu]\cdot,\cdot\right>$ is an inner-product on the positive invariant subspace of $\mathcal{L}[\mu]$,
an application of Cauchy-Schwarz gives
\[
\left<\mathcal{L}[\mu]\widetilde{p},\widetilde{p}\right>\geq \frac{\left<\mathcal{L}[\mu]\widetilde{p},p\right>^2}{\left<\mathcal{L}[\mu]p,p\right>}>-\lambda^2 A^2.
\]
Substituting this bound into \eqref{e:want1}, the result now follows.
\end{proof}

\begin{rem}\label{R:whitham}
It is important to note that the Jacobian determinants arising above have important physical significance in their relation to the so-called Whitham theory
of modulations, which aims to study the stability of long wavelength perturbations that affect the continuous Lie symmetries of the underlying wave, i.e.
to slow modulations of the periodic traveling wave.  Whitham's theory of modulations is a well developed physical theory for dealing with such problems, consisting
of formal WKB/averaging approaches to derive quasilinear systems of PDEs, often referred to as the Whitham system,
to describe the slow evolution of the frequency and conserved quantities of wave when subjected to such long-wavelength (modulational) perturbations:
for more information, see \cite{Whitham65,Whitham74}.
 As such, from the point of view
of Whitham's theory of modulations, the period and conserved quantities, which for the bCH equation under consideration are $T$, $F_1$, and $F_2$, 
provide a natural set of coordinates for the underlying manifold of periodic traveling waves.  Of course, from our work in Section \ref{S:exist} we
know that the parameters $(a,E,c)$ provide a smooth parameterization of the manifold of periodic traveling wave solutions of \eqref{e:bch}.
In order for these two parameterizations to be compatible 
one needs to ensure the ability to smoothly change between these coordinate systems, which is precisely guaranteed by
the non-vanishing of the Jacobian determinant
\[
\{T,F_1,F_2\}_{a,E,c} = \det\left(\frac{\partial(T,F_1,F_2)}{\partial(a,E,c)}\right).
\]
Similarly, the non-vanishing of $\{T,F_1\}_{E,c}$ is equivalent to requiring that $T$ and $F_1$ provide 
a smooth parameterization of the family of traveling waves with a fixed value of the parameter $a$.  As such, at least the non-vanishing
of the product $\{T,F_1\}_{E,c}\{T,F_1,F_2\}_{a,E,c}$ can be seen to be a natural geometric requirement on the manifold of periodic traveling waves,
at least from the standpoint of Whitham's theory of modulations.  In fact, it has been shown in several models that this requirement allows
for the rigorous justification (at the level of spectral stability) of the stability predictions coming from Whitham's theory: see, for example,
\cite{BrHJ16_2,CMP22,JP20,JZBR10} and references therein.
\end{rem}

Next, we upgrade the result of Lemma \ref{L:coercive_L2} to provide a coercivity bound in $H^1$.

\begin{prop}\label{P:coercive_H1}
Under the same hypotheses as Lemma \ref{L:coercive_L2}, we have
\[
\left<\mathcal{L}[\mu]m,m\right>\geq C\|m\|_{H^1(0,T)}^2
\]
for all $m\in\mathcal{T}_0$ with $m\perp\mu_x$.
\end{prop}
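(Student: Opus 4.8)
The plan is to combine the $L^2$ coercivity already furnished by Lemma~\ref{L:coercive_L2} with a G\aa rding-type inequality that exposes the derivative $\|m_x\|_{L^2(0,T)}$ hidden inside the quadratic form $\left<\mathcal{L}[\mu]m,m\right>$. Since $\|m\|_{H^1(0,T)}^2=\|m\|_{L^2(0,T)}^2+\|m_x\|_{L^2(0,T)}^2$ and the $L^2$ piece is already controlled on the constrained subspace, the only missing ingredient is a lower bound on $\int_0^T|m_x|^2\,dx$ in terms of the quadratic form, up to a controllable multiple of $\|m\|_{L^2(0,T)}^2$.

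First I would integrate by parts to put the quadratic form into its symmetric Dirichlet shape. Using the representation \eqref{e:L} together with the $T$-periodicity of $m$ (which annihilates all boundary contributions), a direct computation gives
\[
\left<\mathcal{L}[\mu]m,m\right>=\int_0^T p(x)\,|m_x|^2\,dx+\int_0^T W(x)\,|m|^2\,dx,
\]
where $W:=-\tfrac12 p''+\tfrac12 q'-r$ collects all the zeroth-order contributions and is, like $p$ and $q$, a smooth $T$-periodic function. Since $p$ is continuous, $T$-periodic, and strictly positive (recall \eqref{e:L_coeff} and $b>1$), it is bounded below by some constant $p_0>0$, while $W$, being continuous and $T$-periodic, is bounded, say $|W(x)|\le W_0$. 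This yields the G\aa rding inequality
\[
\left<\mathcal{L}[\mu]m,m\right>\ge p_0\,\|m_x\|_{L^2(0,T)}^2-W_0\,\|m\|_{L^2(0,T)}^2,
\]
valid for every $m\in H^1_{\rm per}(0,T)$.

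To conclude, I would interpolate between this bound and the $L^2$ coercivity of Lemma~\ref{L:coercive_L2}. Fix $m\in\mathcal{T}_0$ with $m\perp\mu_x$ and let $C>0$ be the constant from that lemma. For any $\theta\in(0,1)$, splitting $\left<\mathcal{L}[\mu]m,m\right>=\theta\left<\mathcal{L}[\mu]m,m\right>+(1-\theta)\left<\mathcal{L}[\mu]m,m\right>$ and applying the G\aa rding inequality to the first term and the lemma to the second gives
\[
\left<\mathcal{L}[\mu]m,m\right>\ge\theta p_0\,\|m_x\|_{L^2(0,T)}^2+\big((1-\theta)C-\theta W_0\big)\,\|m\|_{L^2(0,T)}^2.
\]
Choosing $\theta>0$ small enough that $(1-\theta)C-\theta W_0>0$ renders both coefficients strictly positive, and the claim follows with the explicit constant $\widetilde C:=\min\{\theta p_0,\,(1-\theta)C-\theta W_0\}$. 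The argument is essentially routine; the only points requiring care are verifying that the boundary terms in the integration by parts genuinely vanish by periodicity and that both inequalities are applied on the same constrained subspace $\{m\in\mathcal{T}_0:m\perp\mu_x\}$, which is automatic since the interpolation never leaves it. I expect no serious obstacle beyond this bookkeeping.
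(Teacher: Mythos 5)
Your argument is correct and follows essentially the same route as the paper's proof: both extract a G\aa rding-type lower bound $\left<\mathcal{L}[\mu]m,m\right>\geq p_0\|m_x\|_{L^2}^2-W_0\|m\|_{L^2}^2$ from the Dirichlet form of $\mathcal{L}[\mu]$ and then interpolate it against the $L^2$ coercivity of Lemma~\ref{L:coercive_L2} by choosing the splitting parameter small. The only cosmetic difference is that you compute the zeroth-order coefficient $W$ explicitly by integrating the non-symmetric form \eqref{e:L} by parts, whereas the paper invokes the symmetric rewriting $-\partial_x(p\partial_x)+r$ directly; your version is, if anything, a bit more careful about where the first-order term goes.
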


\begin{proof}
This follows by an elementary interpolation argument.  Indeed, recalling \eqref{e:L} and rewriting $\mathcal{L}[\mu]$ in the symmetric form
\[
\mathcal{L}[\mu]=-\partial_x\left(p(x)\partial_x\right)+r(x)
\]
we note that for $m\in\mathcal{T}_0$ with $m\perp\mu_x$ we have
\begin{align*}
\left<\mathcal{L}[\mu]m,m\right>&=\int_0^T p\left(m_x\right)^2dx+\int_0^T r m^2~dx\\
&\geq\alpha\int_0^T(m_x)^2dx + \beta\int_0^T m^2dx,
\end{align*}
where here
\[
\alpha:=\inf_{0\leq x\leq T}p(x)~~{\rm and}~~\beta=\inf_{0\leq x\leq T}r(x).
\]
Note, specifically, that $\alpha>0$ by \eqref{e:L_coeff}.  Further, from Lemma \ref{L:coercive_L2} we know there exists a constant
$C_1>0$ such that
\[
\left<\mathcal{L}[\mu]m,m\right>\geq C_1\|m\|_{L^2(0,T)}^2
\]
for all such $m$.  Interpolating these bounds we find that
\[
\left<\mathcal{L}[\mu]m,m\right>\geq 
\alpha \gamma \int_0^T \left(m_x\right)^2 \, dx 
+ \left[\beta \gamma + (1-\gamma)C_1 \right] \int_0^T m^2 \, dx,
\]
where here $\gamma\in[0,1]$ is arbitrary.  Since $\alpha,C_1>0$, the result now follows follows by fixing $\gamma>0$ sufficiently small so that
\[
\beta\gamma+(1-\gamma)C_1>0.
\]
\end{proof}

To proceed, we introduce the semidistance $\rho:H^1_{\rm per}(0,T)\to\RM$ given by
\[
\rho(m_1,m_2)=\inf_{x_0\in\RM}\left\|m_1-m_2(\cdot-x_0)\right\|_{H^1(0,T)}
\]
and note that for a given $m\in H^1_{\rm per}(0,T)$, $\rho(m,\mu)={\rm dist}\left(m,\mathcal{O}_\mu\right)$.  We now show that
the functional $\Lambda$ in \eqref{e:L} is coercive on the nonlinear submanifold $\Sigma_0$ near the periodic
traveling wave $\mu$.

\begin{prop}\label{P:coercivity_nonlinear}
Under the hypothesis of Lemma \ref{L:coercive_L2} there exist a $\delta>0$ and a constant $C=C(\delta)>0$
such that if $m\in\Sigma_0$ with $\rho(m,\mu)<\delta$ then 
\begin{equation}\label{e:coercive_nonlinear}
\Lambda(m)-\Lambda(\mu)\geq C\rho(m,\mu)^2.
\end{equation}
\end{prop}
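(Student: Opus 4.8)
The plan is to prove \eqref{e:coercive_nonlinear} by Taylor expanding $\Lambda$ about the point of the group orbit $\mathcal{O}_\mu$ nearest to $m$ and then invoking the quadratic coercivity estimate of Proposition \ref{P:coercive_H1}. Since $\Lambda$, the constraint set $\Sigma_0$, the operator $\mathcal{L}[\mu]$, and the semidistance $\rho$ are all invariant under spatial translation, I may work at whichever translate is most convenient. Concretely, I would first use this translation invariance together with the implicit function theorem (the standard modulation step) to select $x_0\in\RM$ realizing $\rho(m,\mu)=\|m-\mu(\cdot-x_0)\|_{H^1(0,T)}$ and to set $\bar v:=m-\mu(\cdot-x_0)$, so that $\|\bar v\|_{H^1(0,T)}=\rho(m,\mu)$ and $\bar v$ is orthogonal to the kernel direction $\mu_x(\cdot-x_0)$. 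The essential gain is that, by expanding about the nearest orbit point rather than about $\mu$ itself, the relevant perturbation $\bar v$ has $H^1$-norm exactly $\rho(m,\mu)<\delta$, hence genuinely small.

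Writing $\mu_\star:=\mu(\cdot-x_0)$ and using $\Lambda(\mu_\star)=\Lambda(\mu)$ together with $\Lambda'(\mu_\star)=0$ (Lemma \ref{L:Lagrange} and translation invariance), Taylor's theorem with integral remainder gives
\[
\Lambda(m)-\Lambda(\mu)=\tfrac{1}{2}\left\langle \mathcal{L}[\mu_\star]\bar v,\bar v\right\rangle+R(\bar v),
\]
and the first real task is the remainder bound $|R(\bar v)|\le C\rho(m,\mu)^3$. Here I would exploit the explicit form of the conserved quantities: $E$ is linear and so contributes nothing, while the integrand of $F_2$ in \eqref{e:F1F2} is exactly quadratic in $m_x$ and depends smoothly on $m$ on the set $\{m>0\}$. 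Since $\mu_\star$ is bounded away from zero uniformly in $x_0$ and $H^1_{\rm per}(0,T)\hookrightarrow L^\infty(0,T)$, for $\delta$ small the state $m=\mu_\star+\bar v$ stays uniformly bounded away from zero, so all coefficients appearing in the expansion are uniformly bounded. Every contribution to $R(\bar v)$ then contains at least one undifferentiated factor $\bar v$ and at most two factors $\bar v_x$, the worst being of the form $\int_0^T(\text{bounded})\,\bar v\,\bar v_x^2\,dx$, which is estimated by $\|\bar v\|_{L^\infty}\|\bar v_x\|_{L^2}^2\le C\|\bar v\|_{H^1}^3$. I expect this remainder estimate to be the main obstacle, as it is precisely the point at which the cubic error can only be absorbed using the $H^1$ (not merely $L^2$) coercivity of Proposition \ref{P:coercive_H1}.

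To apply that coercivity I must check that $\bar v$ lies in the tangent space to $\Sigma_0$ at $\mu_\star$ and is orthogonal to $\mu_x(\cdot-x_0)$. The orthogonality is guaranteed by the modulation step. For the tangency, since both $m$ and $\mu_\star$ lie in $\Sigma_0$ one has $F_j(m)=F_j(\mu_\star)$, hence $\langle \tfrac{\partial F_j}{\partial m}(\mu_\star),\bar v\rangle=O(\rho^2)$ for $j=1,2$; thus $\bar v$ lies in the tangent space only up to an $O(\rho^2)$ defect. I would remove this defect by subtracting a correction in the two-dimensional span of the constraint gradients, whose norm is $O(\rho^2)$ and whose contribution to $\langle \mathcal{L}[\mu_\star]\cdot,\cdot\rangle$ is therefore $O(\rho^3)$. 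The translated analogue of Proposition \ref{P:coercive_H1} (valid because the sign hypotheses are expressed through translation-invariant Jacobians of the conserved quantities) then yields $\langle \mathcal{L}[\mu_\star]\bar v,\bar v\rangle\ge C_0\rho^2-O(\rho^3)$.

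Combining the two estimates gives
\[
\Lambda(m)-\Lambda(\mu)\ge \tfrac{1}{2}C_0\,\rho(m,\mu)^2-C\,\rho(m,\mu)^3\ge \tfrac{1}{4}C_0\,\rho(m,\mu)^2
\]
once $\delta$ is chosen small enough that $C\rho(m,\mu)\le\tfrac{1}{4}C_0$, which is \eqref{e:coercive_nonlinear} with $C=\tfrac{1}{4}C_0$. Besides the $H^1$ remainder estimate flagged above, the only other point demanding care is the defect correction ensuring that the modulated perturbation may legitimately be fed into Proposition \ref{P:coercive_H1}.
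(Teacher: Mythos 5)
Your argument follows essentially the same route as the paper's proof: a translation/modulation step to align $m$ with a nearby point of the group orbit, a Taylor expansion of $\Lambda$ whose leading term is $\tfrac12\langle\mathcal{L}[\mu]\bar v,\bar v\rangle$, removal of the $O(\rho^2)$ defect of $\bar v$ from the tangent space $\mathcal{T}_0$ by subtracting a small correction in the span of the constraint gradients, and an application of Proposition \ref{P:coercive_H1}; you are in fact somewhat more explicit than the paper about the cubic remainder bound. The only point to tidy is the modulation step: the minimizing translate yields orthogonality to $\mu_x(\cdot-x_0)$ in $H^1$, not in $L^2$ as required by Proposition \ref{P:coercive_H1}, so one should (as the paper does) impose the $L^2$-orthogonality via the implicit function theorem and settle for $\|\bar v\|_{H^1}$ being comparable to $\rho(m,\mu)$ up to a constant, which suffices.
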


\begin{proof}
First, we note that the Implicit Function Theorem implies that for $\delta>0$ sufficiently small and for a $\delta$-neighborhood 
$\mathcal{U}_\delta=\left\{m\in H^1_{\rm per}(0,T):\rho(m,\mu)<\delta\right\}$ of the group orbit $\mathcal{O}_\mu$, there exists a 
unique $C^1$ map $\gamma:\mathcal{U}_\delta\to\RM$ such that
\[
\gamma(\mu)=0~~~{\rm and}~~~\left<m\left(\cdot+\gamma(m)\right),\mu_x\right>=0
\]
for all $m\in\mathcal{U}_\delta$.  Note that since $\Lambda$ is invariant under spatial translations, it suffices to establish
\eqref{e:coercive_nonlinear} with $m$ replaced by $m(\cdot+\gamma(m))$.  Now, fix $m\in\mathcal{U}_\delta\cap\mathcal{T}_0$
and note that we have the decomposition
\begin{equation}\label{e:decomp1}
m(\cdot+\gamma(m))=\mu + C_1\frac{\partial F_1}{\partial m}(\mu)
	+\left(C_2-C_1\frac{\left<\frac{\partial F_1}{\partial m}(\mu),\frac{\partial F_2}{\partial m}(\mu)\right>}{\left<\frac{\partial F_2}{\partial m}(\mu),\frac{\partial F_2}{\partial m}(\mu)\right>}\right)\frac{\partial F_2}{\partial m}(\mu)+y
\end{equation}
where $C_1,C_2\in\RM$ and $y\in\mathcal{T}_0$.  Notice that if $m=\mu$ then we clearly have $C_1=C_2=y=0$.  We thus expect these quantities to be small
for $m\in\mathcal{U}_\delta$.

To quantify this, let $v=m(\cdot+\gamma(m))-\mu$ and note that, by possibly replacing $\mu$ by an appropriate spatial translate, we may assume $\|v\|_{H^1}<\delta$.
As $F_1$ and $F_2$ are invariant with respect to spatial translation, it follows by Taylor's theorem that
\begin{equation}\label{e:F1F2_expand_near}
\left\{\begin{aligned}
F_1(m)&=F_1\left(m(\cdot+\gamma(m))\right)=F_1(\mu)+\left<\frac{\partial F_1}{\partial m}(\mu),v\right>+\mathcal{O}\left(\|v\|_{H^1}^2\right)\\
F_2(m)&=F_2\left(m(\cdot+\gamma(m))\right)=F_2(\mu)+\left<\frac{\partial F_2}{\partial m}(\mu),v\right>+\mathcal{O}\left(\|v\|_{H^1}^2\right)
\end{aligned}\right.
\end{equation}
Noting from the decomposition \eqref{e:decomp1} that 
\[
\left<\frac{\partial F_2}{\partial m}(\mu),v\right>=C_2\left<\frac{\partial F_2}{\partial m}(\mu),\frac{\partial F_2}{\partial m}(\mu)\right>
\]
it follows from \eqref{e:F1F2_expand_near}(ii) above, noting specifically that $F_2(m)=F_2(\mu)$, that $C_2=\mathcal{O}\left(\|v\|_{H^1}^2\right)$.  Similarly,
\begin{align*}
\left<\frac{\partial F_1}{\partial m}(\mu),v\right>=C_1\left(\left<\frac{\partial F_1}{\partial m}(\mu),\frac{\partial F_1}{\partial m}(\mu)\right>
		-\frac{\left<\frac{\partial F_1}{\partial m}(\mu),\frac{\partial F_2}{\partial m}(\mu)\right>^2}{\left<\frac{\partial F_2}{\partial m}(\mu),\frac{\partial F_2}{\partial m}(\mu)\right>}\right)
		+C_2\left<\frac{\partial F_1}{\partial m}(\mu),\frac{\partial F_2}{\partial m}(\mu)\right>
\end{align*}
and hence, since the Cauchy-Schwarz implies the coefficient of $C_1$ is non-zero, we infer also that $C_1=\mathcal{O}\left(\|v\|_{H^1}^2\right)$.

Now, since $\mu$ is a critical point of the action functional $\Lambda(m)$ in \eqref{e:LambdaOmega}, which is of invariant under spatial translations, Taylor's
theorem further implies that
\[
\Lambda(m)=\Lambda\left(m\left(\cdot+\gamma(m)\right)\right)=\Lambda(\mu)+\frac{1}{2}\left<\frac{\partial^2 \Lambda}{\partial m^2}(\mu)v,v\right>+o\left(\|v\|_{H^1}^2\right)
\]
and hence, using the decomposition \eqref{e:decomp1} along with the above estimates on the constants $C_{1,2}$, we find
\[
\Lambda(m)-\Lambda(\mu)=\frac{1}{2}\left<\mathcal{L}[\mu]v,v\right>+o\left(\|v\|_{H^1}^2\right)=
	\frac{1}{2}\left<\mathcal{L}[\mu]y,y\right>+\mathcal{O}\left(\|v\|_{H^1}^2\right).
\]
Since $y\in\mathcal{T}_0\cap\{\mu_x\}^\perp$, it follows by Proposition \ref{P:coercive_H1} that
\[
\left<\mathcal{L}[\mu]y,y\right>\geq C\|y\|_{H^1}^2.
\]
Noting that the reverse triangle inequality gives
\begin{align*}
\|y\|_{H^1}&\geq\left| \|v\|_{H^1}-\left\| C_1\frac{\partial F_1}{\partial m}(\mu)
	+\left(C_2-C_1\frac{\left<\frac{\partial F_1}{\partial m}(\mu),\frac{\partial F_2}{\partial m}(\mu)\right>}{\left<\frac{\partial F_2}{\partial m}(\mu),\frac{\partial F_2}{\partial m}(\mu)\right>}\right)\frac{\partial F_2}{\partial m}(\mu)\right\|_{H^1}\right|\\
	&\geq \|v\|_{H^1}-\widetilde{C}\|v\|_{H^1}^2,
\end{align*}
where here we again used the above estimates on $C_{1,2}$, it follows that
\[
\Lambda(m)-\Lambda(\mu)\geq C\|v\|_{H^1}^2=C\left\|m(\cdot+\gamma(m))-\mu\right\|_{H^1}^2\geq C\rho(m,\mu)^2,
\]
as desired.
\end{proof}

With the above preliminaries, we are ready to state and establish our main result.

\begin{thm}[Main Result]\label{T:main}
For a fixed $b>1$, let $\mu(\cdot;a,E,c)$ be a $T$-periodic solution of \eqref{e:quad}.  Assume that $\{T,\omega_1\}_{E,c}>0$ and
that the product $\{T,F_1\}_{E,c}\{T,F_1,F_2\}_{a,E,c}$ is negative.  Given any $\varepsilon>0$ sufficiently
small there exists a constant $C=C(\varepsilon)>0$ such that if $v\in H^1_{\rm per}(0,T)$ and $\|v\|_{H^1_{\rm per}(0,T)}\leq \eps$ and
if $m(\cdot,t)$ is a solution of \eqref{e:bch_m} for some interval of time with the initial condition $u(\cdot,0):=\mu+v$
then $m(\cdot,t)$ may be continued to a solution for all $t>0$ such that
\[
\sup_{t>0}\inf_{x_0\in\mathbb{R}}\left\|m(\cdot,t)-\mu(\cdot-x_0)\right\|_{H^1_{\rm per}(0,T)}\leq C\|v\|_{H^1_{\rm per}(0,T)}.
\]
\end{thm}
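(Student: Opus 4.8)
The plan is to run the classical Lyapunov argument of Grillakis--Shatah--Strauss \cite{GSS1}, in the periodic setting following \cite{J09}, using the action functional $\Lambda$ in \eqref{e:LambdaOmega} as a conserved Lyapunov functional and the coercivity of Proposition \ref{P:coercivity_nonlinear} (via its linear counterpart, Proposition \ref{P:coercive_H1}) as the mechanism that controls the distance to the group orbit $\mathcal{O}_\mu$. Two structural facts drive the proof: first, $\Lambda$ is conserved along the flow of \eqref{e:bch_m}, since $E$, $F_1$, $F_2$ are each conserved and $\omega_1,\omega_2$ are fixed constants; second, $\mu$ is a critical point of $\Lambda$ whose second variation $\mathcal{L}[\mu]$ has the definite structure established in Theorem \ref{T:morse}. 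The estimate will be closed by a continuity (bootstrap) argument, and global continuation will follow from the assumed local well-posedness of \eqref{e:bch_m} in $H^1_{\rm per}(0,T)$ together with its attendant blow-up alternative. I expect the genuine difficulty to be that the perturbed datum $\mu+v$, and hence the solution $m(\cdot,t)$, generically leaves the constraint manifold $\Sigma_0$ on which Proposition \ref{P:coercivity_nonlinear} is stated; reconciling this drift is the heart of the matter.

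First I would fix $\varepsilon>0$, take $\|v\|_{H^1}\le\varepsilon$, and let $m(\cdot,t)$ be the local solution emanating from $\mu+v$ on its maximal interval $[0,T_{\max})$. Set $d(t):=\rho(m(\cdot,t),\mu)$, which is continuous in $t$ with $d(0)\le\|v\|_{H^1}\le\varepsilon$. The objective is to show $d(t)\le C\|v\|_{H^1}$ for a fixed $C$ as long as $d(t)<\delta$, where $\delta$ is the neighborhood radius from Proposition \ref{P:coercivity_nonlinear}; a standard continuity argument then shows that, once $\varepsilon$ is small enough that $C\varepsilon<\delta$, the inequality $d(t)<\delta$ can never be violated, so the bound persists on all of $[0,T_{\max})$. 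On this region I would use the Implicit Function Theorem (exactly as in the construction of the map $\gamma$ in the proof of Proposition \ref{P:coercivity_nonlinear}) to modulate the translation parameter and, after translating, write $m(\cdot,t)=\mu+w(t)$ with $w(t)\perp\mu_x$ and $\|w(t)\|_{H^1}$ comparable to $d(t)$.

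The crux is that $m(\cdot,t)$ lies on the level set $\{F_1=F_1(\mu+v),\ F_2=F_2(\mu+v)\}$ rather than on $\Sigma_0$ itself. Here I would exploit that the constraint drift is controlled at first order: since each $F_j$ is smooth and conserved, Taylor expansion gives $\left\langle \frac{\partial F_j}{\partial m}(\mu),w\right\rangle = F_j(\mu+v)-F_j(\mu)+\mathcal{O}(\|w\|_{H^1}^2)=\mathcal{O}(\|v\|_{H^1}+\|w\|_{H^1}^2)$ for $j=1,2$. Thus $w$ sits within $\mathcal{O}(\|v\|_{H^1}+\|w\|_{H^1}^2)$ of the tangent space $\mathcal{T}_0$, and one may decompose $w=w_0+w_\parallel$ with $w_0\in\mathcal{T}_0$, $w_0\perp\mu_x$, and $\|w_\parallel\|_{H^1}=\mathcal{O}(\|v\|_{H^1}+\|w\|_{H^1}^2)$. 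Applying the $H^1$-coercivity of Proposition \ref{P:coercive_H1} to $w_0$ and expanding the cross and parallel terms then yields the lower bound $\left\langle\mathcal{L}[\mu]w,w\right\rangle \ge C\|w\|_{H^1}^2-C'\|w\|_{H^1}\bigl(\|v\|_{H^1}+\|w\|_{H^1}^2\bigr)-C''\bigl(\|v\|_{H^1}+\|w\|_{H^1}^2\bigr)^2$.

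Finally I would match this against the conserved upper bound from the Lyapunov functional. Because $\Lambda$ is conserved and $\mu$ is a critical point, $\frac12\left\langle\mathcal{L}[\mu]w,w\right\rangle+o(\|w\|_{H^1}^2)=\Lambda(m(\cdot,t))-\Lambda(\mu)=\Lambda(\mu+v)-\Lambda(\mu)=\mathcal{O}(\|v\|_{H^1}^2)$. Inserting the coercivity bound, absorbing the cubic term $\|w\|_{H^1}^3$ and the $o(\|w\|_{H^1}^2)$ remainder into $C\|w\|_{H^1}^2$ using the smallness $d(t)<\delta$, and then using Young's inequality on the cross term $\|w\|_{H^1}\|v\|_{H^1}$, I obtain $\|w(t)\|_{H^1}\le C_*\|v\|_{H^1}$, hence $d(t)\le C_*\|v\|_{H^1}$, which closes the bootstrap. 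The uniform bound $\|m(\cdot,t)\|_{H^1}\lesssim\|\mu\|_{H^1}+\varepsilon$ thereby obtained, together with the blow-up alternative from local well-posedness, forces $T_{\max}=+\infty$, yielding global existence and the asserted stability estimate.
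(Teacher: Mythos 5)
Your proof is correct, but it resolves the one genuinely delicate step --- the fact that the perturbed datum $\mu+v$ generically leaves the constraint set $\Sigma_0$ --- by a different mechanism than the paper. The paper re-centers the whole argument on a \emph{nearby traveling wave}: using that $\{T,F_1,F_2\}_{a,E,c}\neq 0$ makes $(a,E,c)\mapsto(T,F_1,F_2)$ a local diffeomorphism, it selects parameter shifts $(\Delta a,\Delta E,\Delta c)=\mathcal{O}(\eps)$ so that $\widetilde\mu$ satisfies $F_j(\widetilde\mu)=F_j(\mu+v)$, applies Proposition \ref{P:coercivity_nonlinear} verbatim to the augmented functional $\widetilde\Lambda$ about $\widetilde\mu$, and concludes by the triangle inequality $\rho(m,\mu)\le\rho(m,\widetilde\mu)+\|\widetilde\mu-\mu\|_{H^1}$. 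You instead keep $\mu$ as the reference state and treat the constraint violation perturbatively: conservation gives $\bigl\langle\tfrac{\partial F_j}{\partial m}(\mu),w\bigr\rangle=\mathcal{O}(\|v\|_{H^1}+\|w\|_{H^1}^2)$, so the component of $w$ along the constraint gradients is small at first order in $\|v\|_{H^1}$ (rather than second order as in the on-manifold case), and the resulting linear cross term is absorbed by Young's inequality. Both close; your route is more self-contained in that it does not separately invoke the reparameterization (and so sidesteps the question, left implicit in the paper, of why the coercivity constants for $\widetilde\Lambda$ about $\widetilde\mu$ are uniform in the parameter shifts), while the paper's route preserves the clean purely quadratic inequality $\widetilde\Lambda(m)-\widetilde\Lambda(\widetilde\mu)\ge C\rho(m,\widetilde\mu)^2$ at the price of tracking the perturbed wave and its Lagrange multipliers. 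One small point worth making explicit in your write-up: the decomposition $w=w_0+w_\parallel$ with $w_0\in\mathcal{T}_0$ and $w_0\perp\mu_x$ is legitimate because $\bigl\langle\tfrac{\partial F_j}{\partial m}(\mu),\mu_x\bigr\rangle=0$ by translation invariance of $F_j$, so projecting off the (smooth, fixed) constraint gradients does not disturb the orthogonality to $\mu_x$ secured by the modulation map $\gamma$; you also need $H^1\hookrightarrow L^\infty$ to keep $m$ bounded away from zero so that $\Lambda$ remains smooth along the flow. Your explicit appeal to the blow-up alternative for global continuation is, like the paper's, contingent on the standing well-posedness assumption.
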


\begin{rem}
Recalling that a $T$-periodic solution $m(\cdot,t)$ of \eqref{e:bch_m} corresponds to a $T$-periodic solution $u(\cdot,t)=(1-\partial_x^2)^{-1}m(\cdot,t)$
of \eqref{e:bch}, the above $H^1_{\rm per}(0,T)$ stability result for the momentum density $m(\cdot,t)$ immediately translates
to an orbital stability result in $H^3_{\rm per}(0,T)$ for the associated solution $u(\cdot,t)$.
\end{rem}

\begin{proof}(Proof of Theorem \ref{T:main})  Let $\delta>0$
be such that Proposition \ref{P:coercivity_nonlinear} holds and let $v\in H^1_{\rm per}(0,T)$ satisfy
$\rho(\mu+v,\mu)\leq \eps$ for some $0<\eps<\delta$ sufficiently small.  By replacing $v$ by an appropriate spatial translate, if necessary,
we may assume that $\|v\|_{H^1}\leq\eps$.  Since $\mu$ is a critical point of $\Lambda$, Taylor's theorem implies that
$\Lambda(\mu+v)-\Lambda(\mu)\leq C\eps^2$.  Further, if $\mu+v\in\Sigma_0$ then the unique solution $m(\cdot,t)$ of \eqref{e:bch_m}
with initial condition $m(\cdot,0)=\mu+v$ must lie in $\Sigma_0$ for as along as the solution exists.  
Since $\Lambda(m(\cdot,t))=\Lambda(m(\cdot,0))=\Lambda(\mu+v)$ independently of $t$, it follows by Proposition \ref{P:coercivity_nonlinear} that
$\rho(m(\cdot,t),\mu)\leq C\eps$ for all $t\geq 0$.  This establishes Theorem \ref{T:main} in the case of perturbations which
preserve the conserved quantities $F_1$ and $F_2$.

If $\mu+v\notin\Sigma_0$, then we claim we can vary the constants $(a,E,c)$ slightly in order to effectively reduce this case to the previous one.
Indeed, notice that since we have assumed $\{T,F_1,F_2\}_{a,E,c}\neq 0$ at $\mu=\mu(\cdot;a_0,E_0,c_0)$, it follows that the map
\[
(a,E,c)\mapsto\left(T(\mu(\cdot;a,E,c)),F_1(\mu(\cdot;a,E,c)),F_2(\mu(\cdot;a,E,c))\right)
\]
is a diffeomorphism from a neighborhood of $(a_0,E_0,c_0)$ onto a neighborhood of 
\[
\left(T(\mu(\cdot;a_0,E_0,c_0)),F_1(\mu(\cdot;a_0,E_0,c_0)),F_2(\mu(\cdot;a_0,E_0,c_0))\right).
\]
In particular, we can find constants $\Delta a$, $\Delta E$ and $\Delta c$ with $|\Delta a|+|\Delta E|+|\Delta c|=\mathcal{O}(\eps)$
such that the function
\[
\widetilde\mu=\widetilde\mu\left(\cdot;a_0+\Delta a,E_0+\Delta E,c_0+\Delta c\right)
\]
is an $H^1_{\rm per}(0,T)$ solution\footnote{In particular, we choose $|(\Delta a,\Delta E,\Delta c)|\ll 1$
so that $T(a_0+\Delta a,E_0+\Delta E,c_0+\Delta c)=T(a_0,E_0,c_0)$, i.e. so that 
$\widetilde{\mu}$ and $\mu$  have the same period.} of \eqref{e:bch_m} and the constants further satisfy 
\begin{align*}
F_1(a_0+\Delta a,E_0+\Delta E,c_0+\Delta c)&=F_1(\mu+v)\\
F_2(a_0+\Delta a,E_0+\Delta E,c_0+\Delta c)&=F_2(\mu+v).
\end{align*}
Defining the augmented action functional
\[
\widetilde\Lambda(m)=E(m)-\omega_1(a_0+\Delta a,E_0+\Delta E,c_0+\Delta c)F_1(m)-\omega_2(a_0+\Delta a)F_2(m)
\]
for $m\in H^1_{\rm per}(0,T)$, where $\omega_1,\omega_2$ are defined as in Lemma \ref{L:Lagrange}, it follows as before that
\[
\widetilde\Lambda(m(\cdot,t))-\widetilde\Lambda(\widetilde\mu)\geq C_3\rho(m(\cdot,t),\widetilde\mu)^2
\]
for some $C_3>0$ as long as $\rho(m(\cdot,t),\widetilde\mu)$ is sufficiently small.  Since $\widetilde\mu$
is a critical point of the functional $\widetilde\Lambda$, we have
\[
C_3\rho(m(\cdot,t),\widetilde\mu)^2\leq \widetilde\Lambda(m(\cdot,0))-\widetilde\Lambda(\widetilde\mu)\leq C_4\left\|m(\cdot,0)-\widetilde\mu\right\|_{H^1_{\rm per}(0,T)}^2
\]
for some constant $C_4>0$.  Moreover, by the triangle inequality we have
\[
\left\|m(\cdot,0)-\widetilde\mu\right\|_{H^1_{\rm per}(0,T)}\leq\left\|m(\cdot,0)-\mu\right\|_{H^1_{\rm per}(0,T)}+\left\|\mu-\widetilde\mu\right\|_{H^1_{\rm per}(0,T)}\leq C_5\eps 
\]
for some $C_5>0$, and hence there exists a constant $C_6>0$ such that
\[
\rho(m(\cdot,t),\mu)\leq \rho(m(\cdot,t)\widetilde\mu)+\left\|\widetilde\mu-\mu\right\|_{H^1_{\rm per}(0,T)}\leq C_6\eps 
\]
for all $t>0$.  This completes the proof of Theorem \ref{T:main}.  
\end{proof}

\begin{rem}
For an alternative argument in the case when $\mu+v\notin\Sigma_0$ in the above proof, see \cite{ANP19}.
\end{rem}

Theorem \ref{T:main} provides geometric conditions guaranteeing the orbital stability of $T$-periodic traveling wave solutions of \eqref{e:bch} to perturbations which are $T$-periodic.
It thus remains to analyze the signs of the quantity $\{T,\omega_1\}_{E,c}$ as well as that of the product $\{T,F_1\}_{E,c}\{T,F_1,F_2\}_{a,E,c}$.  As discussed in 
Remark \ref{R:morse}, the Jacobian $\{T,\omega_1\}_{E,c}$ can be analytically shown to be positive for at least some waves in the integrable 
cases $b=2$ and $b=3$, corresponding to the Camassa-Holm and Degasperis-Procesi equations, respectively.  For such waves, Lemma \ref{T:morse} implies
that the Hessian operator $\mathcal{L}[\mu]$ will have only one negative eigenvalue as well as a simple eigenvalue at the origin.

To date we have not been able to analytically evaluate the remaining two Jacobians, namely $\{T,F_1\}_{E,c}$ and $\{T,F_1,F_2\}_{a,E,c}$.  As discussed in
Remark \ref{R:whitham} these quantities arise naturally in the stability analysis of periodic traveling waves to more general classes of perturbations (namely,
to long-wavelength modulational perturbations).  As such, we believe their appearance in the co-periodic stability analysis of the same waves is completely
natural and expected.  

In previous works on nonlinear dispersive equations of KdV and NLS type with polynomial type nonlinearities, these remaining Jacobians could be analytically studied through the use of the
so-called Picard-Fuchs relation: see \cite{BrJK11,LBrJM21}.  Specifically, in those examples the period and conserved quantities of the given
equation , as well as their various derivatives in terms of the ODE  parameters  $a$, $e$ and $c$, 
could be expressed as Abelian integrals on an appropriate Riemann surface.  Since there are only a finite number of such Abelian integrals it follows
that the derivatives of $T$, $F_1$ and $F_2$ with respect to $a$, $E$, and $c$ could be expressed in terms of the quantities
$T$, $F_1$ and $F_2$ themselves.  These relations, known as the Picard-Fuchs relation, allow the various Jacobians in the associated stability theory
to be calculated directly (either analytically or numerically) in terms of the underlying wave itself.  

Unfortunately, in order to use the Picard-Fuchs relation it must be that the potential  function in the ODE existence theory
is a polynomial in the wave $\phi$.  For the b-CH equation, the potential function $V(\phi;a,c)$ is given explicitly 
in \eqref{e:potential}, and for $b>1$ it is certainly not a polynomial function in $\phi$.  As such, the algebraic approaches from the 
previous works \cite{BrJK11,LBrJM21} do not apply.  

It is possible that these Jacobians can be analytically determined for general $b>1$ by considering either waves with asymptotically
small oscillations (bifurcating from the minimum $\phi_2$ of the potential $V(\phi;a,c)$) or with asymptotically large periods (taking the solitary
wave limit $0<V(\phi_1;a,c)-E\ll 1$).  

Additionally, in the integrable cases $b=2$ and $b=3$ one can determine closed form expressions for the 
periodic traveling waves, albeit with respect to different parameterizations than used in this paper: see, for example, \cite{GMNP22} for the
case $b=2$ and \cite{GP22} for the case $b=3$.  It may be possible to use these expressions to study our stability conditions numerically.
While this is currently beyond the expertise of the authors, we are hopeful these discussions will motivate other researchers to carry out the 
relevant analysis.

Finally, for general $b>1$ one may be able to numerically determine the signs of the Jacobians arising in our stability theory.  The fact that the waves 
depend on several parameters, and that the Jacobian conditions depend on derivatives of the conserved quantities with respect to all of these
parameters, this is seemingly a more delicate numerical problem than considered numerically for solitary wave studies or even for the periodic
studies in \cite{GMNP22,GP22}.  Such numerical computations are outside the scope of the author's expertise.  However, we sincerely
hope that our work motivates other researchers to pursue such a detailed numerical study.

\section{Conclusions}
In this work, we have derived a set of geometric criteria that guarantee the nonlinear orbital stability of smooth periodic traveling wave solutions to the b-CH equation \eqref{e:bch}
with $b>1$.  The stability analysis was performed in terms of the momentum variable $m$, which was shown to be controlled in $H^1_{\rm per}(0,T)$.  Correspondingly,
solutions $u(x,t)$ of \eqref{e:bch} are controlled in $H^3_{\rm per}(0,T)$.  The stability conditions are geometric in nature and relate to the parameterization of the manifold
of periodic traveling waves with respect to various coordinate systems and the Jacobians between them.  While we were able to use the previous works
\cite{GMNP22,GP22} to analytically verify one of the geometric criteria, relating to the number of negative eigenvalues of the associated Hessian operator,
we were unable to analytically verify the remaining criteria, although we argued that the conditions are natural and expected if one considers the stability
to more general classes of perturbations.  We expect there are certain asymptotic limits (the small amplitude and solitary wave limits) where an analytical study may be possible
for general $b>1$, and explicit solutions may yield a numerical approach for the integrable cases $b=2$ and $b=3$.  Further, a general numerical
investigation may be possible.  As such techniques are outside the expertise of the authors, here we simply identify these as outstanding open problems,
and we hope our work motivates other researchers to conduct these detailed studies.  Our work does, however, identify a set of geometric criteria that imply nonlienar 
orbital stability of a given periodic traveling wave.  

We also note that such geometric conditions appear naturally in the 
stability analysis of perioidc traveling wave solutions to more general classes of perturbations, specifically, to side-band or modulational perturbations: see, for example,
\cite{BrHJ16_2,BrJ10,LBrJM21} and references therein.  This allows for deep connections to Whitham's theory of wave modulations and its applications outside
of weakly nonlinear regime, and hence opens the door to further stability studies for the b-CH equation and rigorous connections (at the level of stability) of Whitham's theory.
In addition to the numerical and analytical stability studies mentioned above, we believe our work motivates a careful study of the modulational stability analysis of periodic
traveling wave solutions of the b-CH equation.  This coulud include not only a rigorous modulational stability study, but also a mathematically rigorous justification
of stability predictions coming from applying Whitham's theory of modulations to the b-CH equations.  This would  be a  very interesting direction for future study.

Finally, it would  be very interesting to connect the failure of our geometric criteria to the possible instability of the underlying periodic traveling wave.
For example, if $\{T,\omega_1\}_{E,c}<0$ we know that the operator $\mathcal{L}[\mu]$ will have two negative eigenvalues.  Does the wave
exhibit an instability, maybe spectrally, in this case?  Similarly, would the condition $\{T,F_1\}_{E,c}\{T,F_1,F_2\}_{a,E,c}>0$ imply an instability
of the background wave?  Such questions may be amenable to other analytical methods, such as Hamiltonian-Krein Index Theory\cite{KP13,BrJK14} or 
periodic Evans function techniques\cite{BrJ10}.  This also would be a very interesting direction for future investigation.

\appendix
\section{Appendix}\label{S:Appendix}

In this appendix, we first present an analytical study of the Jacobian $\{T,\omega_1\}_{E,c}$, which was seen by Theorem \ref{T:morse} to control the number of negative 
eigenvalues of the operator $\mathcal{L}[\mu]$.  We also establish a technical result that was needed in the proof of Theorem \ref{T:morse}.

\subsection{Analysis of $\{T,\omega_1\}_{E,c}$}\label{S:A1}

In Theorem \ref{T:morse} it was shown that the sign of the Jacobian $\{T,\omega_1\}_{E,c}$ determines the number of negative eigenvalues of the Hessian operator
$\mathcal{L}[\mu]$.  Throughout our stability analysis, it was important for us to assume that $\mathcal{L}[\mu]$ had only one negative eigenvalue, as well
as a simple eigenvalue at zero.  This spectral assumption is verified provided one can show that $\{T,\omega_1\}_{E,c}>0$.  While we can not determine the sign
of this quantity in general, in this section we discuss how a scaling identity may be used to better understand this quantity.

First, we note from the explicit expression for the Lagrange multiplier $\omega_1$ in Lemma \ref{L:Lagrange} we have
\[
\{T,\omega_1\}_{E,c}=c\left(\frac{b-1}{a^{1/b}}\right)\left(T_E-\frac{1}{c}T_c\right)
\]
Below, we use a scaling relation to rewrite the above and, in some special cases, express it in terms of quantities that have been studied in other works.

Now, we note that from the profile equation \eqref{e:quad} that the transformation
\[
\phi(x)=c\psi(x),~~a=c^{1+b}\alpha,~~E=c^2\beta
\]
normalizes the wave speed $c$ to unity, so that $\psi$, $\alpha$ and $\beta$ satify the same equation \eqref{e:quad} but with $c=1$.  It follows that the period function satisfies
the scaling relation
\[
T(a,E,c) = T\left(\frac{a}{c^{1+b}},\frac{E}{c^2},1\right)=T(\alpha,\beta,1).
\]
In particular, we see that
\[
T_c(a,E,c)=-\frac{(b+1)a}{c^{b+2}}T_a(\alpha,\beta,1)-\frac{2E}{c^3}T_E(\alpha,\beta,1)
\]
and
\[
T_E(a,E,c)=\frac{1}{c^2}T_E(\alpha,\beta,1).
\]
It follows that
\begin{equation}\label{e:morse_rescaled}
\{T,\omega_1\}_{E,c}=c\left(\frac{b-1}{a^{1/b}}\right)\left[\frac{2E+c^2}{c^4}T_E(\alpha,\beta,1)+\frac{(b+1)a}{c^{b+3}}T_a(\alpha,\beta,1)\right]
\end{equation}
Regarding the coefficient of $T_E$ above, note that from the existence theory in Section \ref{S:exist} we know for $b>1$ that periodic traveling waves
only exist when $(a,E,c)\in\mathcal{B}$, which is defined in \eqref{e:ExistenceSet}.  In particular, we must have $E>V(\phi_2;a,c)$ which, using the explicit
form of the potential function $V$ in \eqref{e:potential}, implies that
\[
E>V(\phi_2;a,c)\geq-\frac{1}{2}\phi_2^2\geq-\frac{c^2}{2},
\]
where the last inequality follows by the upper-bound on $\phi_2$ in \eqref{e:maxmin}.  Thus, the coefficient of $T_E$ in the above expression
for $\{T,\omega_1\}_{E,c}$ in \eqref{e:morse_rescaled} is positive.  Similarly, it follows by definition that the coefficient of $T_a$ in \eqref{e:morse_rescaled} is positive as well.

By the above considerations, it follows that a sufficient condition to guarantee the positivity of $\{T,\omega_1\}_{E,c}$, as required in our stability analysis 
in  is that both of the derivatives $T_E(\alpha,\beta,1)$ and $T_a(\alpha,\beta,1)$ are positive.  Both of the quantities $T_E$ and $T_a$ have been analytically studied
in the integrable cases $b=2$ and $b=3$: see \cite{GMNP22,GV15} for the case $b=2$ and and \cite{GP22}
for the  case $b=3$.  In these previous studies, the authors utilized various results on period monotonicity
 in Hamiltonian systems of ODE, and the results are summarized below.

\begin{prop}\label{P:period_monotone}
For a given $b>1$, fix $(a,E,c)\in\mathcal{B}$ as in \eqref{e:ExistenceSet}.  For both the cases $b=2$, corresponding to the classical Camassa-Holm equation,
and $b=3$, corresponding to the Degasperis-Procesi equation, one has that $T_E(a,E,c)>0$ for all $(a,E,c)\in\mathcal{B}$.  Furthermore, again in the 
cases $b=2$ and $b=3$, for a fixed $c>0$ there exists a $\widetilde{E}(b)\in(V(\phi_2(b);a,c),0)$ such that
\begin{itemize}
\item[(i)] $T_a(a,E,c)>0$ if $E\in(V(\phi_2(b);a,c),\widetilde{E}(b))$,
\item[(ii)] $T(a,E,c)$ has a single maximum point in $a$ if $E\in(\widetilde{E}(b),0)$,
\item[(iii)] $T_a(a,E,c)<0$ if $E\in(0,V(\phi_1(b);a,c))$.
\end{itemize}
\end{prop}

By the representation \eqref{e:morse_rescaled}, Proposition \ref{P:period_monotone} implies that for the cases $b=2$ and $b=3$
one necessarily has
\begin{equation}\label{e:cond1}
\{T,\omega_1\}_{E,c}>0
\end{equation}
for all $(a,E,c)\in\mathcal{B}$ with $E\in(V(\phi_2(b);a,c),\widetilde{E}(b))$.  Practically speaking, we note that $V(\phi_2(b);a,c)$ is the minimum ODE energy level associated
with the existence of periodic traveling wave solutions of \eqref{e:bch}, and $\widetilde{E}(b)$ is some (necessarily negative) energy level.  Looking
at Figure \ref{f:Vplot}, it follows that for the cases $b=2$ and $b=3$ there are periodic traveling waves (those oscillating near the local minimum of the 
potential $V$) for
which the condition \eqref{e:cond1} is necessarily satisfied.  We note, of course, that one still may have \eqref{e:cond1} being satisfied for
larger values of the energy $E$ depending on of course the magnitude of the individual derivatives $T_E$ and $T_a$ in \eqref{e:morse_rescaled}.  This balance,
however, does not seem to follow from previous works, nor does it seem that the methodologies utilized there apply.

\subsection{A Technical Result}\label{S:A2}
In this appendix, we establish a technical result needed in the proof of Theorem \ref{T:morse}.

\begin{lem}\label{L:technical}
Let $\mu=\mu(\cdot;a,E,c)$ be a $T$-periodic even traveling wave solution of the b-CH equation \eqref{e:bch_m}, and let
$\mu_+(a,E,c) := \mu(0;a,E,c)$ denote the global maximum of the solution $\mu$.  Then $\mu_{xx}(0)<0$
and 
\[
\{\mu_+,\omega_1\}_{E,c} > 0
\]
\end{lem}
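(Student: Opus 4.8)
The plan is to reduce both claims to the geometry of the effective potential $V(\cdot;a,c)$ at the turning point $\phi_+ := \phi(0)$, which coincides with the maximum value of the profile. I first observe that since $\mu = a(c-\phi)^{-b}$ with $a>0$ and $c-\phi>0$ by \eqref{e:relation1}, the map $\phi\mapsto\mu$ is strictly increasing, so $\mu$ attains its maximum exactly where $\phi$ does; as $\mu$ is even with maximum at $x=0$, this gives $\phi'(0)=0$ and $\phi(0)=\phi_{\max}$.

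For the first claim, I would use the differentiated relations \eqref{e:mu_derivatives}: since $\phi'(0)=0$, the second identity collapses to $\mu_{xx}(0)=\frac{b\,\mu(0)}{c-\phi(0)}\,\phi''(0)$, so that (as $b>1$, $\mu(0)>0$ and $c-\phi(0)>0$) the sign of $\mu_{xx}(0)$ equals that of $\phi''(0)$. Differentiating the quadrature \eqref{e:quad} gives $\phi''=-V_\phi(\phi)$ along the orbit, whence $\phi''(0)=-V_\phi(\phi_+)$. Since for $(a,E,c)\in\mathcal{B}$ the energy $E$ lies strictly between $V(\phi_2;a,c)$ and $V(\phi_1;a,c)$, the orbit is a nondegenerate periodic orbit trapped in the well about the local minimum $\phi_2$, so its right turning point satisfies $\phi_2<\phi_+<c$. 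On this interval $V$ has no critical point, and $V_\phi>0$ there (it is positive just to the right of the minimum $\phi_2$), so $V_\phi(\phi_+)>0$ strictly, giving $\phi''(0)<0$ and hence $\mu_{xx}(0)<0$.

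For the second claim, I would first unwind the Jacobian using the explicit derivatives of $\omega_1$ recorded in Remark \ref{r:LagrangeMult}: since $(\omega_1)_E=(b-1)a^{-1/b}$ and $(\omega_1)_c=c(b-1)a^{-1/b}$, one finds $\{\mu_+,\omega_1\}_{E,c}=\frac{b-1}{a^{1/b}}\bigl(c\,(\mu_+)_E-(\mu_+)_c\bigr)$, and the prefactor is positive. Writing $\mu_+=a(c-\phi_+)^{-b}$ and differentiating reduces the problem to showing $c\,(\phi_+)_E+1-(\phi_+)_c>0$. The partials of $\phi_+$ follow from implicit differentiation of the defining relation $E=V(\phi_+;a,c)$: using $V_\phi(\phi_+)=\mu_+-\phi_+$ and the directly computed $V_c(\phi_+)=-\mu_+$, I obtain $(\phi_+)_E=1/V_\phi(\phi_+)$ and $(\phi_+)_c=\mu_+/V_\phi(\phi_+)$. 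Substituting these yields the clean cancellation $c\,(\phi_+)_E+1-(\phi_+)_c=(c-\phi_+)/V_\phi(\phi_+)$, which is positive because $c-\phi_+>0$ and $V_\phi(\phi_+)>0$ from the first part.

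Thus both claims rest on the single inequality $V_\phi(\phi_+)>0$, and the step I expect to require the most care is the nondegeneracy of the turning point, namely verifying that excluding the endpoints $E=V(\phi_{1,2};a,c)$ from $\mathcal{B}$ forces $\phi_+$ strictly into the region where $V$ is increasing. The remaining computations are routine implicit differentiations, and the only real subtlety there is tracking signs and confirming the cancellation in the final step.
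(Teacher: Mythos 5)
Your proposal is correct and follows essentially the same route as the paper: reduce $\mu_{xx}(0)<0$ to $\phi''(0)<0$ via \eqref{e:mu_derivatives}, then reduce the Jacobian to the quantity $c\,(\phi_+)_E+1-(\phi_+)_c$ and show it equals $(c-\phi_+)/V_\phi(\phi_+)=-(c-\phi_+)/\phi''(0)>0$; your implicit differentiation of $E=V(\phi_+;a,c)$ is the same computation the paper performs on the explicit quadrature relation, since $V_\phi(\phi_+)=\mu_+-\phi_+=-\phi''(0)$. The only (harmless) difference is that you justify $\phi''(0)<0$ from the potential-well geometry on $(\phi_2,c)$, where the paper simply invokes the non-degeneracy of the maximum of $\phi$.
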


\begin{proof}
First, note by \eqref{e:mu_derivatives} that 
\begin{align*}
\mu''(0)&= 
 \frac{b\phi''(0)}{c-\phi(0)}\mu(0)
\end{align*}
which, since $\phi$ has a non-degenerate local maximum at $x=0$ and satisfies $\phi(x)<c$ for all $x\in\RM$, implies that $\mu_{xx}(0)<0$ as claimed.

Next, evaluating \eqref{e:relation1} at $x=0$ and differentiating with respect to the parameter $E$ we find that
\[
\frac{\partial}{\partial E}\left[\mu_+(a,E,c)\right] = \frac{d}{dE}\left[\frac{a}{\left(c-\phi(0)\right)^b}\right] =\frac{ab\phi_E(0)}{\left(c-\phi(0)\right)^{b+1}}.
\]
Similarly, we have
\[
\frac{\partial}{\partial c}(\mu_+(a,E,c))= \frac{ab\left(c-\phi(0)\right)^{b-1}\left[\phi_c(0)-1\right]}{\left(c-\phi(0)\right)^{2b}}= \frac{ab[\phi_c(0)-1]}{\left(c-\phi(0)\right)^{b+1}}
\]
and hence, using the identities in Remark \ref{r:LagrangeMult}, gives
\begin{equation}\label{e:tech2}
\{\mu_+,\omega_1\}_{E,c}=\frac{a^{1-1/b}(b-1)b}{(c-\phi(0))^{b+1}}\left(c\frac{\partial\phi_+}{\partial E}-\frac{\partial\phi_+}{\partial c}+1\right),
\end{equation}
where here we set $\phi_+(a,E,c):=\phi(0;a,E,c)$.  To continue, note evaluating \eqref{e:quad} at $x=0$ gives, after rearranging,
\[
\left(2E+\phi_+^2\right)\left(c-\phi_+^{b-1}\right)=\frac{2a}{b-1}.
\]
Differentiating with respect to $E$ and simplifying, we find
\[
\frac{\partial\phi_+}{\partial E}\left[c\phi_+-E(b-1)-(\phi_+)^2\left(\frac{b+1}{2}\right)\right] = -(c-\phi_+).
\]
Recalling Remark \ref{r:quad_alternate} we note that
\[
(c-\phi_+)\phi''(0)=c\phi_+-(b-1)E-\frac{b+1}{2}\phi_+^2
\]
and hence, since $c-\phi_+>0$ and $\phi''(0)<0$, by above, it follows that 
\[
\frac{\partial\phi_+}{\partial E}=\frac{-(c-\phi_+)}{c\phi_+-(b-1)E-\frac{b+1}{2}\phi_+^2}=-\frac{1}{\phi''(0)}
\]
Similarly, evaluating \eqref{e:relation1} at $x=0$ and differentiating with respect to the parameter $c$ gives
\[
\frac{\partial \phi_+}{\partial c}\left[c\phi_+ -(b-1)E - \frac{b+1}{2}(\phi_+)^2\right] 
= -(b-1)\left(E+\frac{1}{2}\left(\phi_+\right)^2\right).
\]
Since \eqref{e:quad} implies
\[
E+\frac{1}{2}\phi_+^2=\frac{-a}{(b-1)(c-\phi_+)^{b-1}}
\]
we have
\[
(c-\phi_+)\phi''(0)\frac{\partial \phi_+}{\partial c}=\frac{a}{(c-\phi_+)^{b-1}},~~i.e.~~
\frac{\partial \phi_+}{\partial c}=-\frac{1}{\phi''(0)}\left(\frac{a}{(c-\phi_+)^{b}}\right)=-\frac{\phi_+-\phi''(0)}{\phi''(0)}.
\]

Together, the above calculations give
\[
c\frac{\partial\phi_+}{\partial E}-\frac{\partial\phi_+}{\partial c}+1=-\frac{c}{\phi''(0)}-\frac{\phi_++\phi''(0)}{\phi''(0)}+1=-\frac{c-\phi_+}{\phi''(0)},
\]
which is strictly positive.  By \eqref{e:tech2}, it follows that 
\[
\{\mu_+,\omega_1\}_{E,c}>0,
\]
as claimed.

\end{proof}


\begin{thebibliography}{10}

\bibitem{ANP19}
G.~Alves, F.~Natali, and A.~Pastor.
\newblock Sufficient conditions for orbital stability of periodic traveling
  waves.
\newblock {\em J. Differential Equations}, 267(2):879--901, 2019.

\bibitem{BrJK14}
J.~Bronski, M.~A. Johnson, and T.~Kapitula.
\newblock An instability index theory for quadratic pencils and applications.
\newblock {\em Comm. Math. Phys.}, 327(2):521--550, 2014.

\bibitem{BrHJ16_2}
J.~C. Bronski, V.~M. Hur, and M.~A. Johnson.
\newblock Modulational instability in equations of {K}d{V} type.
\newblock In {\em New approaches to nonlinear waves}, volume 908 of {\em
  Lecture Notes in Phys.}, pages 83--133. Springer, Cham, 2016.

\bibitem{BrJ10}
J.~C. Bronski and M.~A. Johnson.
\newblock The modulational instability for a generalized {K}orteweg-de {V}ries
  equation.
\newblock {\em Arch. Ration. Mech. Anal.}, 197(2):357--400, 2010.

\bibitem{BrJK11}
J.~C. Bronski, M.~A. Johnson, and T.~Kapitula.
\newblock An index theorem for the stability of periodic travelling waves of
  {K}orteweg-de {V}ries type.
\newblock {\em Proc. Roy. Soc. Edinburgh Sect. A}, 141(6):1141--1173, 2011.

\bibitem{Chicone87}
C.~Chicone.
\newblock The monotonicity of the period function for planar {H}amiltonian
  vector fields.
\newblock {\em J. Differential Equations}, 69(3):310--321, 1987.

\bibitem{CMP22}
W.~A. Clarke, R.~Marangell, and W.~R. Perkins.
\newblock Rigorous justification of the {W}hitham modulation equations for
  equations of {W}hitham type.
\newblock {\em Stud. Appl. Math.}, 149(2):297--323, 2022.

\bibitem{CW02}
A.~Constantin and W.~A. Strauss.
\newblock Stability of the {C}amassa-{H}olm solitons.
\newblock {\em J. Nonlinear Sci.}, 12(4):415--422, 2002.

\bibitem{DHH03}
A.~Degasperis, D.~D. Holm, and A.~N.~W. Hone.
\newblock Integrable and non-integrable equations with peakons.
\newblock In {\em Nonlinear physics: theory and experiment, {II} ({G}allipoli,
  2002)}, pages 37--43. World Sci. Publ., River Edge, NJ, 2003.

\bibitem{DKK02}
A.~Degasperis, D.~D. Kholm, and A.~N.~I. Khon.
\newblock A new integrable equation with peakon solutions.
\newblock {\em Teoret. Mat. Fiz.}, 133(2):170--183, 2002.

\bibitem{DGH01}
H.~R. Dullin, G.~A. Gottwald, and D.~D. Holm.
\newblock An integrable shallow water equation with linear and nonlinear
  dispersion.
\newblock {\em Phys. Rev. Lett.}, 87:194501, 2001.

\bibitem{EY08}
J.~Escher and Z.~Yin.
\newblock Well-posedness, blow-up phenomena, and global solutions for the
  {$b$}-equation.
\newblock {\em J. Reine Angew. Math.}, 624:51--80, 2008.

\bibitem{GH07}
T.~Gallay and M.~H\v{a}r\v{a}gus.
\newblock Orbital stability of periodic waves for the nonlinear
  {S}chr\"{o}dinger equation.
\newblock {\em J. Dynam. Differential Equations}, 19(4):825--865, 2007.

\bibitem{GMNP22}
A.~Geyer, R.~H. Martins, F.~Natali, and D.~E. Pelinovsky.
\newblock Stability of smooth periodic travelling waves in the {C}amassa-{H}olm
  equation.
\newblock {\em Stud. Appl. Math.}, 148(1):27--61, 2022.

\bibitem{GP22}
A.~Geyer and D.~E. Pelinovsky.
\newblock Stability of smooth periodic traveling waves in the
  {D}egasperis-{P}rocesi equation.
\newblock {\em preprint}, 2022.

\bibitem{GV15}
A.~Geyer and J.~Villadelprat.
\newblock On the wave length of smooth periodic traveling waves of the
  {C}amassa-{H}olm equation.
\newblock {\em J. Differential Equations}, 259(6):2317--2332, 2015.

\bibitem{GSS1}
M.~Grillakis, J.~Shatah, and W.~Strauss.
\newblock Stability theory of solitary waves in the presence of symmetry. {I}.
\newblock {\em J. Funct. Anal.}, 74(1):160--197, 1987.

\bibitem{HGH16}
A.~A. Himonas, K.~Grayshan, and C.~Holliman.
\newblock Ill-posedness for the {$b$}-family of equations.
\newblock {\em J. Nonlinear Sci.}, 26(5):1175--1190, 2016.

\bibitem{Hone09}
A.~N.~W. Hone.
\newblock Painlev\'{e} tests, singularity structure and integrability.
\newblock In {\em Integrability}, volume 767 of {\em Lecture Notes in Phys.},
  pages 245--277. Springer, Berlin, 2009.

\bibitem{HJ15}
V.~M. Hur and M.~A. Johnson.
\newblock Stability of periodic traveling waves for nonlinear dispersive
  equations.
\newblock {\em SIAM J. Math. Anal.}, 47(5):3528--3554, 2015.

\bibitem{J09}
M.~A. Johnson.
\newblock Nonlinear stability of periodic traveling wave solutions of the
  generalized {K}orteweg-de {V}ries equation.
\newblock {\em SIAM J. Math. Anal.}, 41(5):1921--1947, 2009.

\bibitem{J_BBM}
M.~A. Johnson.
\newblock On the stability of periodic solutions of the generalized
  {B}enjamin-{B}ona-{M}ahony equation.
\newblock {\em Phys. D}, 239(19):1892--1908, 2010.

\bibitem{JP20}
M.~A. Johnson and W.~R. Perkins.
\newblock Modulational instability of viscous fluid conduit periodic waves.
\newblock {\em SIAM J. Math. Anal.}, 52(1):277--305, 2020.

\bibitem{JZ_KP}
M.~A. Johnson and K.~Zumbrun.
\newblock Transverse instability of periodic traveling waves in the generalized
  {K}adomtsev-{P}etviashvili equation.
\newblock {\em SIAM J. Math. Anal.}, 42(6):2681--2702, 2010.

\bibitem{JZBR10}
M.~A. Johnson, K.~Zumbrun, and J.~C. Bronski.
\newblock On the modulation equations and stability of periodic generalized
  {K}orteweg-de {V}ries waves via {B}loch decompositions.
\newblock {\em Phys. D}, 239(23-24):2057--2065, 2010.

\bibitem{KP13}
T.~Kapitula and K.~Promislow.
\newblock {\em Spectral and dynamical stability of nonlinear waves}, volume 185
  of {\em Applied Mathematical Sciences}.
\newblock Springer, New York, 2013.
\newblock With a foreword by Christopher K. R. T. Jones.

\bibitem{LP22}
S.~Lafortune and D.~E. Pelinovsky.
\newblock Stability of smooth solitary waves in the {$b$}-{C}amassa-{H}olm
  equation.
\newblock {\em Phys. D}, 440:Paper No. 133477, 10, 2022.

\bibitem{LBrJM21}
K.~P. Leisman, J.~C. Bronski, M.~A. Johnson, and R.~Marangell.
\newblock Stability of traveling wave solutions of nonlinear dispersive
  equations of {NLS} type.
\newblock {\em Arch. Ration. Mech. Anal.}, 240(2):927--969, 2021.

\bibitem{LLW20}
J.~Li, Y.~Liu, and Q.~Wu.
\newblock Spectral stability of smooth solitary waves for the
  {D}egasperis-{P}rocesi equation.
\newblock {\em J. Math. Pures Appl. (9)}, 142:298--314, 2020.

\bibitem{LL23}
T.~Long and C.~Liu.
\newblock Orbital stability of smooth solitary waves for the {$b$}-family of
  {C}amassa-{H}olm equations.
\newblock {\em Phys. D}, 446:Paper No. 133680, 7, 2023.

\bibitem{L03}
O.~Lopes.
\newblock A class of isoinertial one parameter families of selfadjoint
  operators.
\newblock In {\em Nonlinear equations: methods, models and applications
  ({B}ergamo, 2001)}, volume~54 of {\em Progr. Nonlinear Differential Equations
  Appl.}, pages 191--195. Birkh\"{a}user, Basel, 2003.

\bibitem{MN02}
A.~V. Mikhailov and V.~S. Novikov.
\newblock Perturbative symmetry approach.
\newblock {\em J. Phys. A}, 35(22):4775--4790, 2002.

\bibitem{N09}
A.~Neves.
\newblock Floquet's theorem and stability of periodic solitary waves.
\newblock {\em J. Dynam. Differential Equations}, 21(3):555--565, 2009.

\bibitem{Schaaf85}
R.~Schaaf.
\newblock A class of {H}amiltonian systems with increasing periods.
\newblock {\em J. Reine Angew. Math.}, 363:96--109, 1985.

\bibitem{Whitham65}
G.~B. Whitham.
\newblock Non-linear dispersive waves.
\newblock {\em Proc. Roy. Soc. London Ser. A}, 283:238--261, 1965.

\bibitem{Whitham74}
G.~B. Whitham.
\newblock {\em Linear and nonlinear waves}.
\newblock Pure and Applied Mathematics. Wiley-Interscience [John Wiley \&
  Sons], New York-London-Sydney, 1974.

\end{thebibliography}

\end{document}